\newtheorem{theorem}{Theorem}[section]
\newtheorem{corollary}{Corollary}
\newtheorem{lemma}[theorem]{Lemma}
\newtheorem{proposition}{Proposition}
\newtheorem{remark}{Remark}
\begin{document}
 \begin{center}    
      THE CAMASSA-HOLM EQUATION AS THE LONG-WAVE LIMIT OF THE IMPROVED BOUSSINESQ EQUATION AND OF A CLASS OF  NONLOCAL WAVE EQUATIONS
 \end{center}







\centerline{\scshape H.A. Erbay$^*$ and S. Erbay}
\medskip
{\footnotesize
 \centerline{ Department of Natural and Mathematical Sciences, Faculty of Engineering,}
   \centerline{Ozyegin University,}
   \centerline{ Cekmekoy 34794, Istanbul, Turkey}
} 

\medskip

\centerline{\scshape A. Erkip}
\medskip
{\footnotesize
 \centerline{ Faculty of Engineering and Natural Sciences,}
   \centerline{Sabanci University,}
   \centerline{Tuzla 34956,  Istanbul,    Turkey}

}

\bigskip


\begin{abstract}
In the present study we prove rigorously that in the long-wave limit, the unidirectional solutions of  a class of nonlocal wave equations to which the improved Boussinesq equation belongs  are well approximated by the solutions of the  Camassa-Holm equation over a long time scale. This general class of nonlocal wave equations  model bidirectional wave propagation in a nonlocally and nonlinearly elastic medium whose constitutive equation is given by a convolution integral. To justify the Camassa-Holm approximation  we show that approximation errors   remain small over a long time interval. To be more precise, we obtain error estimates in terms of two independent, small, positive parameters $\epsilon$ and $\delta$ measuring the effect of nonlinearity and dispersion, respectively.  We further show that similar conclusions are also valid for the lower order approximations: the  Benjamin-Bona-Mahony approximation and the Korteweg-de Vries approximation.
\end{abstract}

\section{Introduction}\label{sec1}

In the present paper we rigorously prove that, in the long-wave limit and on a relevant time interval,  the right-going solutions of both the improved Boussinesq (IB) equation
\begin{equation}
    u_{tt}-u_{xx}-\delta^{2}u_{xxtt}-\epsilon (u^{2})_{xx}=0,  \label{ib}
\end{equation}%
and, more generally, the nonlocal wave equation
\begin{equation}
    u_{tt}=\beta_{\delta }\ast ( u+\epsilon u^{2})_{xx} \label{nw}
\end{equation}%
are well approximated by the solutions of the Camassa-Holm (CH) equation
\begin{equation}
    w_t+ w_x+ \epsilon w w_x -\frac{3}{4}\delta^{2} w_{xxx}-{\frac{5}{4}}\delta^{2}w_{xxt} -{\frac{3}{4}}\epsilon \delta^{2}(2 w_x w_{xx}+ w w_{xxx})=0.
    \label{ch}
\end{equation}
In the above equations, $u=u(x,t)$ and $w=w(x,t)$ are  real-valued functions, $\epsilon $ and $\delta $ are two small positive parameters measuring the effect of nonlinearity and dispersion, respectively, the symbol $\ast$ denotes convolution in the $x$-variable, $\beta_{\delta }(x) =\frac{1}{\delta }\beta (\frac{x}{\delta })$ is the kernel function.  It should be noted that  (\ref{ch}) can be written in a more standard form by means of a coordinate transformation. That is,   in a moving frame defined by $\bar{x}={\frac{2}{\sqrt 5}}(x-{\frac{3}{5}}t)$ and $\bar{t}={\frac{2}{3 \sqrt 5}}t $,  (\ref{ch})  becomes
\begin{equation}
    v_{\bar{t}}+ {\frac{6}{5}} v_{\bar{x}}+ 3\epsilon v v_{\bar{x}} -\delta^2 v_{\bar{t}\bar{x}\bar{x}}
        -{\frac{9}{5}}\epsilon \delta^2(2 v_{\bar{x}} v_{\bar{x}\bar{x}}+ v v_{\bar{x}\bar{x}\bar{x}})=0,  \label{chv}
\end{equation}
with $v(\bar{x},\bar{t})=w(x,t)$. Also,  by the use of the scaling transformation $U(X, \tau) =\epsilon u(x, t), ~~x=\delta X,  ~~t=\delta \tau$, (\ref{ib}) and (\ref{ch}) can be written in a more standard form with no parameters, but the above forms of (\ref{ib}) and (\ref{ch}) are more suitable to deal with small-but-finite amplitude long wave solutions.

In the literature, there have been a number of works concerning rigorous justification of the model equations derived for the unidirectional propagation of long waves from nonlinear wave equations modeling various physical systems. One of these model equations is the CH equation \cite{camassa,ionescu,johnson1} derived for the unidirectional propagation of long water waves in the context of a shallow water approximation to the Euler equations of inviscid incompressible fluid flow. The CH equation has attracted much attention  from researchers  over the years. The two main properties of the CH equation are: it is an infinite-dimensional completely integrable Hamiltonian system and it captures wave-breaking of water waves (see \cite{constantin1,constantin2, constantin3,lannes} for details). A rigorous justification of the CH equation for shallow water waves was given in \cite{constantin3}.

In a recent study \cite{eee1}, the CH equation has been also derived as an appropriate model for the unidirectional propagation of  long elastic waves  in an infinite,  nonlocally and nonlinearly elastic medium (see also \cite{eee2}).  The constitutive behavior of the nonlocally and nonlinearly elastic medium is described by a convolution integral (we refer the reader to \cite{duruk1, duruk2} for a detailed description of the nonlocally and nonlinearly  elastic medium) and in the case of quadratic nonlinearity the one-dimensional equation of motion  reduces to the nonlocal equation given in (\ref{nw}). Moreover, the nonlocal equation (that is,  the equation of motion for the  medium) reduces to the IB equation (\ref{ib}) for a particular choice of the kernel function appearing in the integral-type constitutive relation (see Section \ref{sec5} for details). In order to derive formally the CH equation from the IB equation, an asymptotic expansion valid  as nonlinearity and dispersion parameters, that is $\epsilon $ and $\delta $, tend to zero independently is used  in \cite{eee1}. It has been also pointed out  that a similar formal derivation of the CH equation  is possible by starting from the nonlocal equation (\ref{nw}).

The question that naturally arises is under which conditions the unidirectional solutions of  the nonlocal  equation are well approximated by the solutions of the CH equation and this is the subject of the present study. Given a solution of the CH equation we find the corresponding solution of the nonlocal equation and show that the approximation error, i.e. the difference between the two solutions, remains small in suitable norms on a relevant time interval.  We conclude that  the CH equation is an appropriate  model equation for the unidirectional propagation of nonlinear dispersive elastic waves. The methodology used in this study adapts the techniques in \cite{bona,constantin3,gallay}.

We note that, in the terminology of some authors,  our results are in fact consistency-existence-convergence results for the CH approximation of the IB equation and, more generally, of the nonlocal equation. We refer to \cite{bona} and the references therein for a detailed discussion of these concepts.

As it is pointed above, the general class of nonlocal wave equations contains the IB equation as a member. Therefore, to simplify our presentation,  we start with the CH approximation of the IB equation and then extend the analysis to the case of the general class of  nonlocal wave equations. Though our analysis is mainly concerned with the CH approximations of the IB equation and the nonlocal equation, our results apply as well to the Benjamin-Bona-Mahony  (BBM) approximation. We also show how to use our results to justify the Korteweg-de Vries (KdV) approximation.

The structure of  the paper is as follows. In Section \ref{sec2} we observe that the solutions of the CH equation  are uniformly bounded in suitable norms for all values of $\epsilon $ and $\delta $. In Section \ref{sec3} we estimate the residual term that arises when we plug the solution of the CH equation into the IB equation. In Section \ref{sec4}, using the energy estimate based on certain commutator estimates, we complete the proof of the main theorem. In Section \ref{sec5} we extend our consideration from the IB equation to the nonlocal equation and we prove a similar theorem for the nonlocal equation. Finally, in Section \ref{sec6} we give error estimates for the long-wave approximations based on the BBM  equation  \cite{bbm}  and the  KdV  equation  \cite{korteweg}.

Throughout this paper, we use the standard notation  for  function spaces. The Fourier transform of $u$, defined by $\widehat u(\xi)=\int_\mathbb{R} u(x) e^{-i\xi x}dx$,  is denoted by the symbol $\widehat u$. The symbol $\Vert u\Vert_{L^p}$ represents the $L^p$ ($1\leq p <\infty$)  norm of $u$ on $\mathbb{R}$. The symbol $\langle u, v\rangle$ represents the inner product of $u$ and $v$ in $L^2$.   The notation  $H^{s}=H^s(\mathbb{R})$ denotes the $L^{2}$-based Sobolev space of order $s$ on $\mathbb{R}$,  with the norm $\Vert u\Vert_{H^{s}}=\left(\int_\mathbb{R} (1+\xi^2)^s |\widehat u(\xi)|^2 d\xi \right)^{1/2}$.  The symbol $\mathbb{R}$ in $\int_{\mathbb{R}}$ will be suppressed. $C$ is a generic positive constant.  Partial differentiations are denoted by $D_{t}$, $D_{x}$ etc.

\section{Uniform Estimates for the Solutions of the Camassa-Holm Equation}\label{sec2}

In this section, we observe that the solutions $w^{\epsilon, \delta }$ of the CH equation are uniformly bounded in suitable norms for all values of $\epsilon $ and $\delta $.  This  is a direct consequence of the estimates proved by Constantin and Lannes in \cite{constantin3} for a more general class of equations, containing the CH equation as a special case.

For convenience of the reader,  we rephrase below Proposition 4 of \cite{constantin3}.  To that end, we first recall some definitions from \cite{constantin3}: {\it (i)} For every $s\geq 0$, the symbol $X^{s+1}(\mathbb{R})$ represents the space $ H^{s+1}\left( \mathbb{R}\right) $ endowed with the norm $\left\vert f\right\vert _{X^{s+1}}^{2}=\Vert f\Vert _{H^{s}}^{2}+\delta^{2} \Vert f_{x}\Vert _{H^{s}}^{2},$ and {\it (ii )} the symbol $\mathcal{P}$ denotes the index set
\begin{equation*}
    \mathcal{P}=\left\{ (\epsilon ,\delta ) : 0<\delta <\delta_{0}, ~\epsilon \leq M\delta \right\}
\end{equation*}
for some $\delta_{0}>0$ and $M>0$. Then, Proposition 4 of \cite{constantin3} is as follows:
\begin{proposition}\label{prop2.1}
    Assume that \ $\kappa_{5} <0$ and let $\delta_{0}>0$, $M>0$, $s>\frac{3}{2},$ and $w_{0}\in H^{s+1}\left( \mathbb{R}\right)$. Then there exist $T>0$ and a unique family of solutions $\left\{ w^{\epsilon ,\delta}\right\}_{(\epsilon, \delta) \in \mathcal{P}}$ to the Cauchy problem
    \begin{align}
        & w_{t}+w_{x}+\kappa_{1}\epsilon ww_{x}+\kappa_{2}\epsilon^{2}w^{2}w_{x}+\kappa_{3}\epsilon^{3} w^{3}w_{x} +\delta^{2} \left( \kappa_{4} w_{xxx}+\kappa_{5} w_{xxt}\right) \nonumber \\
        &  ~~~~  -\epsilon \delta^{2} (\kappa_{6} ww_{xxx}+\kappa_{7} w_{x}w_{xx})=0, \label{cons-a} \\
        & w(x,0)=w_{0}(x)  \label{cons}
    \end{align}%
    (with constants $\kappa_{i} ~(i=1,2,...,7)$) bounded in $C\left( [0,\frac{T}{\epsilon }],X^{s+1}(\mathbb{R})\right) \cap C^{1}\left( [0,\frac{T}{\epsilon }],X^{s}(\mathbb{R})\right) $.
\end{proposition}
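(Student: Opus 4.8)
Since the statement is a specialization of Proposition~4 of \cite{constantin3}, I only outline the strategy one would follow; the details are carried out there. The idea is to view (\ref{cons-a}) as a quasilinear dispersive perturbation of a linear transport equation and to close a Gronwall-type energy estimate in the $\delta$-weighted space $X^{s+1}$. First I would invert the operator acting on $w_t$: collecting the $w_t$ and $\kappa_5\delta^2 w_{xxt}$ terms, (\ref{cons-a}) reads
\[
    P_\delta\, w_t = -\partial_x G(w), \qquad P_\delta := 1+\kappa_5\delta^2 \partial_x^2 ,
\]
where $G(w)$ gathers the remaining terms, each of which is an exact $x$-derivative (true for $w_x$, $ww_x$, $w^2w_x$, $w^3w_x$, $w_{xxx}$, and for $\kappa_6 ww_{xxx}+\kappa_7 w_xw_{xx}=\partial_x(\kappa_6 ww_{xx}+\tfrac12(\kappa_7-\kappa_6)w_x^2)$ as well). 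Since $\kappa_5<0$, the Fourier symbol of $P_\delta$ equals $1-\kappa_5\delta^2\xi^2\ge 1$, so $P_\delta^{-1}$ is well defined; moreover --- and this is the structural fact that makes the estimates uniform in $\delta$ --- the operators $P_\delta^{-1}$, $\delta\partial_x P_\delta^{-1}$ and $\delta^2\partial_x^2 P_\delta^{-1}$ are bounded on every $H^{\sigma}(\mathbb{R})$ with norms independent of $\delta\in(0,\delta_0]$. Writing the Cauchy problem as $w_t=P_\delta^{-1}(-\partial_x G(w))$, a Friedrichs-mollification (or contraction-mapping) argument produces, for each fixed $(\epsilon,\delta)$, a unique maximal solution in $C([0,T_{\epsilon,\delta}),X^{s+1})\cap C^1([0,T_{\epsilon,\delta}),X^{s})$, the $C^1$ regularity being read directly off the equation using the mapping properties of $P_\delta^{-1}\partial_x$ and that $H^{s}$ is an algebra for $s>\tfrac12$.

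The core is a uniform a priori estimate. I would work with the energy $\mathcal{E}(w):=\tfrac12\langle P_\delta\Lambda^{s}w,\Lambda^{s}w\rangle=\tfrac12\bigl(\Vert\Lambda^{s}w\Vert_{L^2}^2+|\kappa_5|\delta^2\Vert\Lambda^{s}w_x\Vert_{L^2}^2\bigr)$, with $\Lambda=(1-\partial_x^2)^{1/2}$, which is equivalent to $\tfrac12|w|_{X^{s+1}}^2$ with constants depending only on $\kappa_5$. Differentiating along the flow gives $\tfrac{d}{dt}\mathcal{E}(w)=\langle\Lambda^{s}G(w),\Lambda^{s}w_x\rangle$; the linear parts of $G(w)$, namely $w$ and $\kappa_4\delta^2 w_{xx}$, contribute exact $x$-derivatives and hence drop out, so every surviving term carries the prefactor $\epsilon$ --- which is exactly what produces the $1/\epsilon$ time scale. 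Bounding the remaining nonlinear and quasilinear terms by Moser and Kato--Ponce product and commutator estimates, together with $H^{s}\hookrightarrow W^{1,\infty}$ (since $s>\tfrac32$) and $\delta\Vert w_x\Vert_{H^{s}}\le|w|_{X^{s+1}}$, one arrives at
\[
    \frac{d}{dt}\,|w|_{X^{s+1}}^2 \;\le\; \epsilon\, C\bigl(|w|_{X^{s+1}}\bigr)\,|w|_{X^{s+1}}^2 ,
\]
with $C$ independent of $(\epsilon,\delta)\in\mathcal{P}$. Gronwall's inequality then furnishes an $\epsilon$-independent $T>0$ and a bound on $|w(t)|_{X^{s+1}}$ on $[0,T/\epsilon]$ depending only on $|w_0|_{X^{s+1}}$ (hence on $\Vert w_0\Vert_{H^{s+1}}$ and $\delta_0$), on $M$ and on $\delta_0$; this bound feeds back into the local theory, continuing each solution up to $t=T/\epsilon$. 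Uniqueness follows from the analogous estimate for the difference of two solutions measured in the weaker norm $X^{s}$ (or in $L^2$).

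The delicate point --- and where the restriction $\epsilon\le M\delta$ defining $\mathcal{P}$ is genuinely used --- is keeping the constants uniform in $\delta$. The norm $|\cdot|_{X^{s+1}}$ controls $\Vert\cdot\Vert_{H^{s}}$ but controls $\Vert\cdot\Vert_{H^{s+1}}$ only up to a factor $\delta^{-1}$, while the energy identity pairs $\Lambda^{s}G(w)$ against $\Lambda^{s}w_x$, so each nonlinear or quasilinear contribution inevitably generates a factor $\delta^{-1}$; since its only available prefactor is $\epsilon$, it is the boundedness of $\epsilon\delta^{-1}\le M$ that closes the estimate. It is equally essential, for the same reason, first to rewrite $ww_{xxx}$ as the total derivative $\partial_x^2(ww_x)-\tfrac32\partial_x(w_x^2)$ --- or, equivalently, to symmetrize on the level of (\ref{cons-a}) before applying $\Lambda^{s}$ --- so that the smoothing operator $\delta^2\partial_x^2 P_\delta^{-1}$ can recover the two extra derivatives and the commutators remain controllable. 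I expect this $\delta$-bookkeeping for the quasilinear term, rather than the existence step (which is routine), to be the main obstacle; the remainder is a standard, if lengthy, symmetrizer-energy argument of the type carried out in \cite{constantin3}.
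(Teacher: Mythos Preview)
The paper does not actually prove Proposition~\ref{prop2.1}: it is stated as a verbatim rephrasing of Proposition~4 of \cite{constantin3}, and the text says explicitly ``We refer the reader to \cite{constantin3} for the proof of this proposition.'' So there is no argument in the paper for you to be compared against; your outline is a sketch of the symmetrizer--energy proof one finds in \cite{constantin3}, and as a sketch it is sound and follows the same route as that reference (invert $P_\delta$, set up the $X^{s+1}$-equivalent energy $\tfrac12\langle P_\delta\Lambda^{s}w,\Lambda^{s}w\rangle$, exploit that the linear contributions are exact derivatives so that every surviving term carries an $\epsilon$, close by Kato--Ponce and Gronwall).

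One heuristic in your last paragraph is not quite right and is worth correcting before you rely on it elsewhere. You claim that pairing $\Lambda^{s}G(w)$ with $\Lambda^{s}w_x$ ``inevitably generates a factor $\delta^{-1}$'' in every nonlinear term; it does not. For the Burgers-type pieces, integrating by parts once gives
\[
\langle\Lambda^{s}w^{2},\Lambda^{s}w_x\rangle=-2\langle\Lambda^{s}(ww_x),\Lambda^{s}w\rangle
=-2\langle[\Lambda^{s},w]w_x,\Lambda^{s}w\rangle+\int w_x(\Lambda^{s}w)^{2},
\]
and Kato--Ponce plus $s>\tfrac32$ bounds this by $C\Vert w\Vert_{H^{s}}^{3}\le C|w|_{X^{s+1}}^{3}$ with no $\delta^{-1}$ at all. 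The same cancellation handles $w^{2}w_x$ and $w^{3}w_x$. The restriction $\epsilon\le M\delta$ in \cite{constantin3} enters more delicately (for example when a bound is only available in $H^{s+1}$ rather than $X^{s+1}$), not uniformly across all nonlinear terms. This does not affect the validity of your sketch, but you should track where the constraint is genuinely used rather than asserting it is needed everywhere.
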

We refer the reader to  \cite{constantin3} for the proof of this proposition. Furthermore,   $T$ of the existence time $T/\epsilon$ is expressed  in \cite{constantin3} as
\begin{equation*}
    T=T\left(\delta_{0},M,\left\vert w_{0}\right\vert_{X_{\delta_{0}}^{s+1}},\frac{1}{\kappa_{5} },\kappa_{2},\kappa_{3}, \kappa_{6}, \kappa_{7}  \right)>0.~~~
\end{equation*}
Obviously, the CH equation (\ref{ch}) is a special case of (\ref{cons-a}) where $\kappa_{1}=1$, $\kappa_{2}=\kappa_{3}=0$, $\kappa_{4}=-\frac{3}{4} $, $\kappa_{5}=- \frac{5}{4}$ and $2\kappa_{6}=\kappa_{7}=-\frac{3}{2} $. In subsequent sections we will need to use uniform estimates for the terms  $\left\Vert w^{\epsilon ,\delta }\left( t\right) \right\Vert _{H^{s+k}}$  and $\left\Vert w_{t}^{\epsilon ,\delta }\left( t\right) \right\Vert _{H^{s+k-1}}$ with some $k\geq 1$. Proposition \ref{prop2.1} provides us with such estimates, nevertheless to avoid the extra $\delta^{2}$ term  in the $X^{s+1}$-norm, we will use a weaker version based on the inclusion $X^{s+k+1}\subset $ $H^{s+k}$.  Furthermore, for simplicity, we take   $\delta_{0}=M=1$. We thus reach the following corollary:
\begin{corollary}\label{cor2.1}
         Let $w_{0} \in H^{s+k+1}\left( \mathbb{R}\right) $,  $ s>1/2$, $k\geq 1$. Then, there exist $T>0$,  $C>0$  and a unique family of solutions
        \begin{equation*}
            w^{\epsilon ,\delta }\in C\left( [0,\frac{T}{\epsilon }],H^{s+k}(\mathbb{R})\right) \cap C^{1}\left( [0,\frac{T}{\epsilon }],H^{s+k-1}(\mathbb{R})\right)
        \end{equation*}%
        to the CH equation (\ref{ch}) with initial value  $w(x, 0)=w_{0}(x)$, satisfying
        \begin{equation*}
            \left\Vert w^{\epsilon ,\delta }\left( t\right) \right\Vert _{H^{s+k}}+\left\Vert w_{t}^{\epsilon ,\delta }(t) \right\Vert _{H^{s+k-1}}\leq C,
        \end{equation*}%
        for all $0<\delta \leq 1$, \ $\epsilon \leq \delta $ and $t\in \lbrack 0,\frac{T}{\epsilon }]$.
\end{corollary}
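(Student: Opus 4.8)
The plan is to read the Corollary off directly from Proposition \ref{prop2.1}, after matching the CH equation (\ref{ch}) to the general family (\ref{cons-a}) and doing a little index bookkeeping. As noted right after Proposition \ref{prop2.1}, equation (\ref{ch}) is the instance of (\ref{cons-a}) with $\kappa_1=1$, $\kappa_2=\kappa_3=0$, $\kappa_4=-\frac34$, $\kappa_5=-\frac54$ and $2\kappa_6=\kappa_7=-\frac32$; in particular the structural hypothesis $\kappa_5<0$ holds since $\kappa_5=-\frac54$. I would then fix $\delta_0=M=1$, so that the index set becomes $\mathcal{P}=\{(\epsilon,\delta):0<\delta<1,\ \epsilon\le\delta\}$, which is (up to the harmless endpoint $\delta=1$, recovered by taking $\delta_0$ slightly larger) the range of parameters in the statement.

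Next I would apply Proposition \ref{prop2.1} with its regularity parameter $s$ replaced by $s+k$. This is legitimate: since $s>\frac12$ and $k\ge 1$ we have $s+k>\frac32$, so the Sobolev-index hypothesis is met, and $w_0\in H^{s+k+1}(\mathbb{R})$ is exactly what the proposition asks for (namely $w_0\in H^{(s+k)+1}$). Proposition \ref{prop2.1} then produces $T>0$ and a unique family $\{w^{\epsilon,\delta}\}_{(\epsilon,\delta)\in\mathcal{P}}$ of solutions of (\ref{ch}) with $w^{\epsilon,\delta}(\cdot,0)=w_0$, bounded in $C([0,\frac{T}{\epsilon}],X^{s+k+1}(\mathbb{R}))\cap C^1([0,\frac{T}{\epsilon}],X^{s+k}(\mathbb{R}))$ uniformly over $\mathcal{P}$; the existence time $T$ and the uniform bound depend only on the fixed numbers $\kappa_i$ and on $|w_0|_{X_1^{s+k+1}}$, and $|w_0|_{X_1^{s+k+1}}^2=\|w_0\|_{H^{s+k}}^2+\|(w_0)_x\|_{H^{s+k}}^2\le 2\|w_0\|_{H^{s+k+1}}^2$, so these are genuine finite constants determined by the data.

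It then only remains to pass from the $\delta$-weighted norms $|\cdot|_{X^{\sigma+1}}$ to plain Sobolev norms. From $|f|_{X^{\sigma+1}}^2=\|f\|_{H^\sigma}^2+\delta^2\|f_x\|_{H^\sigma}^2$ one has $\|f\|_{H^\sigma}\le|f|_{X^{\sigma+1}}$ for every $\sigma\ge 0$ and every $\delta>0$, i.e. $X^{\sigma+1}\subset H^\sigma$ with operator norm at most one. Applying this with $\sigma=s+k$ to $w^{\epsilon,\delta}(t)$ and with $\sigma=s+k-1$ to $w_t^{\epsilon,\delta}(t)$ gives
\[
\|w^{\epsilon,\delta}(t)\|_{H^{s+k}}+\|w_t^{\epsilon,\delta}(t)\|_{H^{s+k-1}}\le |w^{\epsilon,\delta}(t)|_{X^{s+k+1}}+|w_t^{\epsilon,\delta}(t)|_{X^{s+k}}\le C
\]
for all $(\epsilon,\delta)\in\mathcal{P}$ and all $t\in[0,\frac{T}{\epsilon}]$. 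Since $X^{s+k+1}=H^{s+k+1}$ and $X^{s+k}=H^{s+k}$ as sets, $w^{\epsilon,\delta}$ lies in $C([0,\frac{T}{\epsilon}],H^{s+k})\cap C^1([0,\frac{T}{\epsilon}],H^{s+k-1})$, and uniqueness of this family is inherited from the uniqueness statement of Proposition \ref{prop2.1}.

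There is no substantive obstacle: the argument is pure bookkeeping on top of Proposition \ref{prop2.1}. The only points deserving a moment's attention are verifying that the shifted index still satisfies $s+k>\frac32$ (this is what forces $s>\frac12$, $k\ge 1$ in the statement) and noting that the uniform-in-$\mathcal{P}$ control in the $X$-norm degrades harmlessly to uniform control in the unweighted Sobolev norm; both are immediate.
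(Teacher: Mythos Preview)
Your proposal is correct and follows essentially the same approach as the paper: identify (\ref{ch}) as the special case of (\ref{cons-a}) with the stated $\kappa_i$, apply Proposition~\ref{prop2.1} at regularity level $s+k$ (noting $s+k>\tfrac32$), fix $\delta_0=M=1$, and then drop from the $X^{s+k+1}$-norm to the $H^{s+k}$-norm via the inclusion $X^{\sigma+1}\subset H^\sigma$. The paper's own discussion preceding the corollary is precisely this argument, stated more tersely; your version adds useful detail (the index check, the endpoint $\delta=1$, the norm comparison) but no new ideas are required.
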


\section{Estimates  for the Residual Term Corresponding to the Camassa-Holm Approximation}\label{sec3}

Let $w^{\epsilon ,\delta }$ be the family of solutions mentioned in Corollary \ref{cor2.1} for the Cauchy problem of the CH equation with initial value  $w_{0} \in H^{s+k+1}\left( \mathbb{R}\right) $. In this section we estimate the residual term that arises when we plug $w^{\epsilon ,\delta }$ into the IB equation. Obviously, the residual term $f$  for the IB equation is
\begin{equation}
    f=w_{tt}-w_{xx}-\delta ^{2}w_{xxtt}-\epsilon (w^{2})_{xx},  \label{residual0}
\end{equation}%
where and hereafter  we drop the indices $\epsilon$, $\delta$ in $u$ and $w$ for simplicity.

Using the CH equation we now show that the residual term $f$ has a potential function. We start by rewriting  the CH equation in the form
\begin{equation}
    w_{t}+w_{x}=-\epsilon ww_{x}+{\frac{3}{4}} \delta^{2} w_{xxx}+{\frac{5}{4}} \delta^{2}  w_{xxt}
                +{\frac{3}{4}} \epsilon \delta^{2}  (2w_{x}w_{xx}+ww_{xxx}). \label{ch-right}
\end{equation}%
Using repeatedly (\ref{ch-right})  in (\ref{residual0}) we get
\begin{align}
    f =&\left( {D}_{t}-{D}_{x}\right) \left[ -\epsilon ww_{x}+{\frac{3}{4}}\delta^{2}w_{xxx}+{\frac{5}{4}}\delta^{2}w_{xxt}
            +{\frac{3}{4}}\epsilon \delta^{2}D_{x}(\frac{1}{2}w_{x}^{2}+ww_{xx})   \right]  \nonumber \\
    & ~~ ~~      -\delta^{2}w_{xxtt}-\epsilon (w^{2})_{xx}  \nonumber \\
    =& \epsilon^{2}D_{x}^{2}({\frac{w^{3}}{3}})-{\frac{3}{8}}\epsilon^2 \delta^2\left[ D_{x}^{2}(w_{x}^{2}+2ww_{xx})\right] \nonumber  \\
    &  ~~~~ +{\frac{1}{16}}\delta^4\left[ (D_{x}^{2}D_{t}-3D_{x}^{3})(3w_{xxx}+5w_{xxt})\right]  \nonumber  \\
    & ~~~~  +{\frac{3}{32}}\epsilon \delta^{4}\left[ (D_{x}^{3}D_{t}-3D_{x}^{4})(w_{x}^{2}+2ww_{xx}) \right]  \nonumber \\
    & ~~~~ + {\frac{1}{4}} \epsilon \delta^2 D_{x} \left[ (-3wD_{x}^{2}+2w_{xx}+w_{x}D_{x})(w_{t}+w_{x})\right]. \label{fff}
\end{align}
After some straightforward calculations we write  $f=F_{x}$  with
\begin{align*}
F =& \epsilon^2({\frac{w^{3}}{3}})_{x}-{\frac{1}{8}}\epsilon^2 \delta^2\left[ 3(w_{x}^{2}+2ww_{xx})_{x}
         -3w(w^{2})_{xxx}+2w_{xx}(w^{2})_{x}+w_{x}(w^{2})_{xx}\right]  \\
    & + {\frac{1}{16}} \delta^4\left[ (D_{x}D_{t}-3D_{x}^{2})(3w_{xxx}+5w_{xxt})\right]  \\
    &   +{\frac{1}{32}}\epsilon \delta^{4}\left[ 3(D_{x}^{2}D_{t}-3D_{x}^{3})(w_{x}^{2}+2ww_{xx}) \right.  \\
    & \left. +2(-3wD_{x}^{2}+2w_{xx}+w_{x}D_{x})(3w_{xxx}+5w_{xxt})\right] \\
    & +{\frac{1}{32}}\epsilon^2 \delta^4 \left[ (-9wD_{x}^{3}+6w_{xx}D_{x}+3w_{x}D_{x}^{2})(w_{x}^{2}+2ww_{xx})\right].
\end{align*}%
Note that, except for the term $D_{x}^{3}D_{t}^{2}w$,  $F$ is a combination of terms of the form $D_{x}^{j}w$ with  $j\leq 5$ or $D_{x}^{l}D_{t}w$ with  $l\leq 4$. By taking $k=5$ it immediately follows from Corollary \ref{cor2.1}  that all of the terms in $F$, except $D_{x}^{3}D_{t}^{2}w$, are uniformly bounded in the $H^{s}$ norm. To deal with the term $D_{x}^{3}D_{t}^{2}w$, we first rewrite the CH equation in the form
\begin{equation}
    w_{t}=\mathcal{Q}\left[-w_{x}-\epsilon ww_{x}+{\frac{3}{4}}\delta^{2}w_{xxx}
            +{\frac{3}{4}}\epsilon \delta^{2}(2w_{x}w_{xx}+ww_{xxx})\right], \label{wQop}
\end{equation}
where the operator $\mathcal{Q}$ is
\begin{equation}
   \mathcal{Q}=\left( 1-\frac{5}{4}\delta^{2}D_{x}^{2}\right)^{-1}.  \label{qoperator}
\end{equation}
Then, applying the operator $D_{x}^{3}D_{t}$ to (\ref{wQop}) and using (\ref{ch-right}) we get
\begin{align*}
    D_{x}^{3}D_{t}w_{t}
    =& D_{x}^{3}D_{t}\mathcal{Q}\left[-w_{x}-\epsilon ww_{x}+{\frac{3}{4}}\delta^{2}w_{xxx}
            +{\frac{3}{4}}\epsilon \delta^{2}(2w_{x}w_{xx}+ww_{xxx})\right]  \\
    =& D_{t}\left[ -\mathcal{Q}\left(w_{xxxx}+\epsilon \left( ww_{x}\right) _{xxx}\right) \right. \\
     &   \left. +{\frac{3}{4}\delta ^{2} \mathcal{Q} D_{x}^{2}}w_{xxxx} +{\frac{3}{4}}\epsilon \delta^{2} \mathcal{Q}D_{x}^{2} (2w_{x}w_{xx}+ww_{xxx})_{x}\right].
\end{align*}%
We note that the operator norms of $\mathcal{Q}$ and $\mathcal{Q}\delta ^{2}D_{x}^{2}$  are bounded:
\begin{equation*}
    \left\Vert \mathcal{Q}\right\Vert_{H^{s}}\leq 1\text{ \ and \ }
        \left\Vert \delta^{2}\mathcal{Q}D_{x}^{2}\right\Vert_{H^{s}}\leq \frac{4}{5}.
\end{equation*}%
The use of these bounds and uniform estimate for $D_{x}^{3}D_{t}^{2}w$   yield
\begin{equation}
    \left\Vert D_{x}^{3}D_{t}^{2}w\right\Vert_{H^{s}}\leq C \left\Vert D_{x}^{4}w_{t}\right\Vert_{H^{s}}
    \leq C  \left\Vert w_{t}\right\Vert_{H^{s+4}} . \label{five}
\end{equation}%
 As all the terms in $F$ have coefficients $\epsilon^{2}$,  $\epsilon^{2} \delta^{2}$,  $\delta^{4}$,  $\epsilon \delta^{4}$ or $\epsilon^{2} \delta^{4}$   (with  $0< \epsilon \leq \delta \leq 1$)\  we obtain the following estimate for the potential function
\begin{equation}
    \left\Vert F(t) \right\Vert _{H^{s}}\leq C \left( \epsilon^{2}+\delta ^{4}\right)   \left( \left\Vert w\right\Vert_{H^{s+5}}+\left\Vert w_{t}\right\Vert _{H^{s+4}}\right).  \label{saa}
\end{equation}%
Using Corollary \ref{cor2.1} with $k=5$, we obtain:
\begin{lemma}\label{lem3.1}
    Let $w_{0} \in H^{s+6}\left( \mathbb{R}\right)$, $~s>1/2$. Then, there is some $C>0$ so that the  family of solutions $w^{\epsilon,  \delta}$ to  the CH equation  (\ref{ch}) with initial value \ $w(x, 0) =w_{0}(x)$, satisfy%
\begin{equation*}
    w_{tt}-w_{xx}-\delta ^{2}w_{xxtt}-\epsilon (w^{2})_{xx}=F_{x}
\end{equation*}%
with
\begin{equation*}
    \left\Vert F\left( t\right) \right\Vert _{H^{s}}\leq C\left( \epsilon ^{2}+\delta ^{4}\right) ,
\end{equation*}%
for all $0<\epsilon \leq \delta \leq 1$ and $t\in \lbrack 0,\frac{T}{\epsilon}]$.
\end{lemma}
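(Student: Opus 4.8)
The plan is to carry out, in an organised way, exactly the reduction sketched in the paragraphs preceding the statement. It splits into three stages: (i) rewrite the residual $f$ of (\ref{residual0}) as an explicit expression by repeated use of the CH equation; (ii) recognise that expression as a total $x$-derivative and write down the potential $F$; (iii) estimate $\Vert F\Vert_{H^s}$ uniformly in $\epsilon$ and $\delta$, and then invoke Corollary \ref{cor2.1}.

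For stage (i) I would start from $f=w_{tt}-w_{xx}-\delta^2 w_{xxtt}-\epsilon(w^2)_{xx}$, use the factorisation $w_{tt}-w_{xx}=(D_t-D_x)(w_t+w_x)$, and substitute $w_t+w_x$ from the rewritten CH equation (\ref{ch-right}). Since the dispersive nonlinear term there equals $D_x(\frac12 w_x^2+w w_{xx})$, the bracket that appears is precisely the right-hand side of (\ref{ch-right}). Applying (\ref{ch-right}) once more wherever a factor $w_t+w_x$ (or a derivative of $w_t$) still survives, and collecting terms, one finds that the $O(\epsilon)$, $O(\delta^2)$ and $O(\epsilon\delta^2)$ contributions cancel identically — this is exactly what the CH equation was designed to achieve — leaving $f$ as the sum (\ref{fff}), in which every monomial carries a coefficient $\epsilon^2$, $\epsilon^2\delta^2$, $\delta^4$, $\epsilon\delta^4$ or $\epsilon^2\delta^4$.

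For stage (ii) one checks, via elementary product-rule identities, that each monomial in (\ref{fff}) is a perfect $x$-derivative; this yields $f=F_x$ with $F$ the explicit expression displayed just after (\ref{fff}). Inspecting $F$, every term is a single spatial derivative $D_x^j w$ with $j\le 5$, a mixed derivative $D_x^l D_t w$ with $l\le 4$, a product of such terms (demanding regularity no higher than $H^{s+5}$), or the single exceptional term $D_x^3 D_t^2 w$.

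For stage (iii), all terms of $F$ other than $D_x^3 D_t^2 w$ are bounded in $H^s$ directly by Corollary \ref{cor2.1} with $k=5$, using that $H^{s+k}$ is a Banach algebra for $s>1/2$. The term $D_x^3 D_t^2 w$ is the only delicate point: converting the extra time derivative to space derivatives naively through the equation would cost a factor $\delta^{-2}$, which would be fatal for uniformity. This is circumvented by writing CH in the resolvent form (\ref{wQop}) with $\mathcal{Q}=(1-\frac54\delta^2 D_x^2)^{-1}$, applying $D_x^3 D_t$, and using the $\delta$-uniform operator bounds $\Vert\mathcal{Q}\Vert_{H^s}\le 1$ and $\Vert\delta^2\mathcal{Q}D_x^2\Vert_{H^s}\le\frac45$ to obtain (\ref{five}), i.e. $\Vert D_x^3 D_t^2 w\Vert_{H^s}\le C\Vert w_t\Vert_{H^{s+4}}$. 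Since $0<\epsilon\le\delta\le 1$, each of $\epsilon^2\delta^2,\delta^4,\epsilon\delta^4,\epsilon^2\delta^4$ is bounded by $\epsilon^2+\delta^4$, so $\Vert F(t)\Vert_{H^s}\le C(\epsilon^2+\delta^4)(\Vert w(t)\Vert_{H^{s+5}}+\Vert w_t(t)\Vert_{H^{s+4}})$; a final application of Corollary \ref{cor2.1} with $k=5$ (which is why the hypothesis $w_0\in H^{s+6}$ is needed) bounds the bracket by a constant on $[0,T/\epsilon]$ and finishes the proof. The main work, and the only genuine obstacle, is the bookkeeping in stage (i) — verifying that the cancellations really leave only the high-order coefficients claimed — together with keeping the estimate in stage (iii) uniform in $\delta$, for which the operator $\mathcal{Q}$ is the key device.
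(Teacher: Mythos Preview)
Your proposal is correct and follows essentially the same approach as the paper: the three stages you describe---repeated substitution of (\ref{ch-right}) into the factorised residual to obtain (\ref{fff}), recognition of the total $x$-derivative yielding the explicit $F$, and the $\delta$-uniform treatment of the exceptional term $D_x^3 D_t^2 w$ via the resolvent operator $\mathcal{Q}$---are precisely the steps the paper carries out in Section~\ref{sec3}. Your identification of the $\mathcal{Q}$-device as the key to uniformity, and of Corollary~\ref{cor2.1} with $k=5$ as the source of the regularity hypothesis $w_0\in H^{s+6}$, matches the paper exactly.
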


\section{Justification of the Camassa-Holm Approximation }\label{sec4}

In this section we prove Theorem \ref{theo4.2} given below. We have the well-posedness result for the IB equation (\ref{ib})  in a  general setting \cite{molinet,duruk2}:
\begin{theorem}
    Let $u_{0},u_{1}\in H^{s}\left ( \mathbb{R}\right)$, $~s>1/2$. Then  for any pair of parameters $\epsilon$ and $\delta$, there is some $T^{\epsilon, \delta}>0$ so that the Cauchy problem for the IB equation (\ref{ib}) with initial values $u(x, 0) =u_{0}(x)$,  $u_{t}(x, 0) =u_{1}(x)$ has a unique solution $u \in \ C^{2}\left( [0,T^{\epsilon ,\delta }],H^{s}(\mathbb{R})\right) .$ \
\end{theorem}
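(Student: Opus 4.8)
The plan is to exploit the smoothing provided by the term $-\delta^{2}u_{xxtt}$, which turns (\ref{ib}) into an ordinary differential equation in the Banach space $H^{s}(\mathbb{R})$. Since the operator $1-\delta^{2}D_{x}^{2}$ has the strictly positive Fourier symbol $1+\delta^{2}\xi^{2}$, it is boundedly invertible on every $H^{s}$; applying its inverse to (\ref{ib}) and using $u_{tt}-\delta^{2}u_{xxtt}=(1-\delta^{2}D_{x}^{2})u_{tt}$ together with $u_{xx}+\epsilon(u^{2})_{xx}=D_{x}^{2}(u+\epsilon u^{2})$, one rewrites the equation as
\begin{equation*}
    u_{tt}=K_{\delta}\bigl(u+\epsilon u^{2}\bigr),\qquad K_{\delta}:=\bigl(1-\delta^{2}D_{x}^{2}\bigr)^{-1}D_{x}^{2},
\end{equation*}
supplemented with $u(\cdot,0)=u_{0}$, $u_{t}(\cdot,0)=u_{1}$. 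For each fixed $\delta>0$ the operator $K_{\delta}$ is the Fourier multiplier with symbol $-\xi^{2}/(1+\delta^{2}\xi^{2})$, whose modulus is bounded by $\delta^{-2}$, so $K_{\delta}$ is a bounded linear operator on $H^{s}(\mathbb{R})$ for every $s$.

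Next I would recast the second-order equation as the first-order system
\begin{equation*}
    \frac{d}{dt}\begin{pmatrix} u \\ v \end{pmatrix}=\Phi\begin{pmatrix} u \\ v \end{pmatrix}:=\begin{pmatrix} v \\ K_{\delta}u+\epsilon K_{\delta}(u^{2}) \end{pmatrix}
\end{equation*}
on $H^{s}(\mathbb{R})\times H^{s}(\mathbb{R})$, with initial value $(u_{0},u_{1})$. Because $s>1/2$, $H^{s}(\mathbb{R})$ is a Banach algebra, so $u\mapsto u^{2}$ is locally Lipschitz on $H^{s}$; combined with the boundedness of $K_{\delta}$, this makes $\Phi$ locally Lipschitz on $H^{s}\times H^{s}$. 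The Cauchy--Lipschitz (Picard) theorem for ordinary differential equations in Banach spaces then produces a unique maximal solution $(u,v)\in C^{1}\bigl([0,T^{\epsilon,\delta}],H^{s}\times H^{s}\bigr)$ for some $T^{\epsilon,\delta}>0$ depending only on $\epsilon$, $\delta$, $s$ and $\Vert u_{0}\Vert_{H^{s}}+\Vert u_{1}\Vert_{H^{s}}$.

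It remains to upgrade the regularity. From the system, $u_{t}=v\in C^{1}\bigl([0,T^{\epsilon,\delta}],H^{s}\bigr)$, hence $u_{tt}=v_{t}=K_{\delta}u+\epsilon K_{\delta}(u^{2})$ is continuous from $[0,T^{\epsilon,\delta}]$ into $H^{s}$; therefore $u\in C^{2}\bigl([0,T^{\epsilon,\delta}],H^{s}(\mathbb{R})\bigr)$, which is exactly the asserted regularity, and uniqueness in this class is inherited from uniqueness for the first-order system. There is no genuine obstacle inside this argument — it is the standard fixed-point scheme, made possible entirely by the regularization $(1-\delta^{2}D_{x}^{2})^{-1}$. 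The only caveat worth recording is that the relevant constants, in particular $\Vert K_{\delta}\Vert_{H^{s}\to H^{s}}$, degenerate as $\delta\to0$, so the existence time obtained this way depends on $\delta$; securing the $\delta$-uniform existence time on $[0,T/\epsilon]$ that the main comparison theorem ultimately needs is the genuinely delicate point, and it is handled later by a continuation argument built on the residual estimates of Section~\ref{sec3} rather than by re-running this Picard argument.
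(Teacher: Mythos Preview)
Your argument is correct and is precisely the standard fixed-point scheme for regularized Boussinesq-type equations. However, note that the paper does not actually supply its own proof of this theorem: it is quoted as a known well-posedness result from \cite{molinet,duruk2}, and the paper simply invokes it. So there is nothing to compare against in the paper itself; your proposal fills in what the paper takes for granted, and the method you use---inverting $1-\delta^{2}D_{x}^{2}$ to obtain a bounded multiplier, then applying Picard in the Banach algebra $H^{s}$ for $s>1/2$---is exactly the mechanism behind the results in the cited references (in particular \cite{duruk2}, which treats the more general nonlocal equation (\ref{nw}) of which the IB equation is the special case $\widehat{\beta}(\eta)=(1+\eta^{2})^{-1}$). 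Your closing remark about the $\delta$-dependence of the existence time and its eventual resolution via the continuation argument in the proof of Theorem~\ref{theo4.2} is also accurate.
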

The existence time $T^{\epsilon ,\delta }$ above may depend on $\epsilon$ and $\delta$ and it may be chosen arbitrarily large as long as $T^{\epsilon ,\delta }<T_{\max }^{\epsilon ,\delta }$ where $T_{\max }^{\epsilon ,\delta }$ is the maximal time. Furthermore, it was shown in \cite{duruk2} that the existence time, if it is finite,  is determined by the $L^{\infty }$ blow-up condition
\begin{equation*}
    \lim_{t\rightarrow T_{\max }^{\epsilon ,\delta }} \sup\left\Vert u\left(t\right) \right\Vert _{L^{\infty }}=\infty .
\end{equation*}%

We now consider the solutions $w$ of the  CH equation with initial data $w(x,0)=w_{0}$. Then we take $w_{0}(x)$ and $w_{t}(x,0)$ as the initial conditions for the IB equation (\ref{ib}), that is,
\begin{equation*}
    u(x,0) =w_{0}(x),\text{ \ } u_{t}(x,0) =w_{t}(x,0).
\end{equation*}%
Let $u$ be the corresponding solutions of the Cauchy problem defined for the IB equation (\ref{ib}) with these initial conditions. Since $w_{0} \in H^{s+6}(\mathbb{R})$, clearly $u(x,0), u_{t}(x, 0) \in H^{s}(\mathbb{R})$. Recalling from Corollary \ref{cor2.1} that the guaranteed existence time for  $w$ is $T/\epsilon$, without loss of generality we will take $T^{\epsilon ,\delta }\leq T/\epsilon$.

In the course of our proof of  Theorem \ref{theo4.2}, we will use certain commutator estimates. We recall that the commutator is defined as $[K, L]=KL-LK$. We refer the reader to \cite{lannes} (see Proposition B.8) for the following result.
\begin{proposition}\label{prop4.2}
    \noindent\ Let $q_{0}>1/2$, $~s\geq 0$ and let  $\sigma $ be a Fourier multiplier of order $s$.
    \begin{enumerate}
    \item If \  $0\leq s\leq q_{0}+1$ and $w\in H^{q_{0}+1}$ then, for all $g\in H^{s-1}$, one has
        \begin{equation*}
            \Vert [ \sigma (D_{x}), w]g\Vert_{L^{2}}\leq C\Vert w_{x}\Vert_{H^{q_{0}}}\Vert g\Vert_{H^{s-1}},
    \end{equation*}
    \item If $-q_{0}< r\leq q_{0}+1-s$ and $w\in H^{q_{0}+1}$ then, for all $g\in H^{r+s-1}$, one has
        \begin{equation*}
            \Vert [ \sigma (D_{x}), w]g\Vert_{H^{r}}\leq C\Vert w_{x}\Vert_{H^{q_{0}}}\Vert g\Vert_{H^{r+s-1}}.
        \end{equation*}
    \end{enumerate}
\end{proposition}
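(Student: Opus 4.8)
The plan is to prove this by Bony's paraproduct calculus in a Littlewood--Paley framework, which is how this Kato--Ponce/Calder\'on type commutator estimate is obtained in \cite{lannes}. First I would observe that (1) is just the case $r=0$ of (2): the constraint $-q_{0}<0\le q_{0}+1-s$ reduces to $s\le q_{0}+1$, while $H^{r+s-1}=H^{s-1}$ and $H^{r}=L^{2}$, so it suffices to prove (2). The computation that guides the method is the Fourier-side identity
\[
\widehat{[\sigma(D_{x}),w]g}(\xi)=\int\left(\sigma(\xi)-\sigma(\xi-\eta)\right)\widehat{w}(\eta)\,\widehat{g}(\xi-\eta)\,d\eta,
\]
which makes two features explicit: it vanishes when $w$ is constant, so only $\|w_{x}\|$ can appear on the right; and in the regime $|\eta|\ll|\xi|$ the mean value theorem gives $\sigma(\xi)-\sigma(\xi-\eta)=O(\langle\xi\rangle^{s-1}|\eta|)$, so one derivative is transferred onto $w$ and the commutator behaves there like an operator of order $s-1$, mapping $H^{r+s-1}$ into $H^{r}$.

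Next I would split, with $T$ the paraproduct and $R$ the resonant remainder, $wg=T_{w}g+T_{g}w+R(w,g)$ and likewise $w\,\sigma(D_{x})g=T_{w}\sigma(D_{x})g+T_{\sigma(D_{x})g}w+R(w,\sigma(D_{x})g)$, so that
\[
[\sigma(D_{x}),w]g=[\sigma(D_{x}),T_{w}]g+\left(\sigma(D_{x})T_{g}w-T_{\sigma(D_{x})g}w\right)+\left(\sigma(D_{x})R(w,g)-R(w,\sigma(D_{x})g)\right).
\]
The first term is the main one: by the para-commutator lemma (symbolic calculus for the order-$s$ symbol $\sigma$), $[\sigma(D_{x}),T_{w}]$ is an operator of order $s-1$ whose norm $H^{r+s-1}\to H^{r}$ is bounded by $C\|\nabla w\|_{L^{\infty}}$ for every $r$, and since $q_{0}>1/2$ the embedding $H^{q_{0}}\hookrightarrow L^{\infty}$ turns this into $C\|w_{x}\|_{H^{q_{0}}}$. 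For the paraproduct terms $\sigma(D_{x})T_{g}w$ and $T_{\sigma(D_{x})g}w$ I would use the standard mapping properties of paraproducts together with the order of $\sigma(D_{x})$; the point making the bound involve $\|w_{x}\|_{H^{q_{0}}}$ rather than $\|w\|_{H^{q_{0}+1}}$ is that $T_{g}$ and $T_{\sigma(D_{x})g}$ only see the dyadic blocks of $w$ above a fixed frequency, where $\|w\|_{H^{\rho}}\simeq\|w_{x}\|_{H^{\rho-1}}\le\|w_{x}\|_{H^{q_{0}}}$ provided $\rho\le q_{0}+1$, and tracking orders shows this needs exactly $r\le q_{0}+1-s$. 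For the resonant term I would peel off the finitely many lowest blocks $\Delta_{j}w$ (on which $[\sigma(D_{x}),\Delta_{j}w]$ again gains a derivative on $w$) and handle the genuine high--high interactions by a Calder\'on-type cancellation together with the sharp remainder bound $\|R(a,b)\|_{H^{\alpha+\beta-1/2}}\le C\|a\|_{H^{\alpha}}\|b\|_{H^{\beta}}$ (valid for $\alpha+\beta>1/2$); here the conditions $q_{0}>1/2$ and $-q_{0}<r\le q_{0}+1-s$ are precisely what make the resulting dyadic series converge.

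The hard part will be the para-commutator estimate itself, carried out for an \emph{arbitrary} Fourier multiplier of order $s$ rather than $\sigma(\xi)=\langle\xi\rangle^{s}$: one must show that $[\sigma(D_{x}),S_{j-1}w]$ applied to the $j$th dyadic block of $g$ is bounded on $L^{2}$ with norm $\le C\,2^{j(s-1)}\|\nabla w\|_{L^{\infty}}$, either from the explicit convolution kernel of this operator using $\sigma\in S^{s}$, or via pseudodifferential symbolic calculus; everything else is Littlewood--Paley bookkeeping. The second delicate point is the resonant/remainder term, where the output frequency need not be comparable to the input frequencies, so that the summation over dyadic scales (rather than almost-orthogonality) must be organised with care --- this is where the lower bound $r>-q_{0}$ and, together with $q_{0}>1/2$, the full admissible range of $r$ are actually used. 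For the complete argument I would follow the proof of Proposition B.8 in \cite{lannes}.
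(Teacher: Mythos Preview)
The paper does not prove Proposition~\ref{prop4.2} at all: it simply quotes the statement and refers the reader to Proposition~B.8 in \cite{lannes}. There is thus no ``paper's own proof'' to compare against; the result is used as a black box.

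Your sketch is essentially the standard Bony paraproduct/Littlewood--Paley argument underlying the proof in \cite{lannes}, and the outline is sound: the reduction of (1) to the case $r=0$ of (2) is correct, the Fourier identity for $[\sigma(D_{x}),w]g$ is the right starting point, and the threefold decomposition into the para-commutator $[\sigma(D_{x}),T_{w}]g$, the cross paraproduct terms, and the resonant remainder is exactly how one isolates the gain of one derivative on $w$. Your identification of where each hypothesis enters (the embedding $H^{q_{0}}\hookrightarrow L^{\infty}$ from $q_{0}>1/2$ for the main term, the upper bound $r\le q_{0}+1-s$ for the paraproduct pieces, the lower bound $r>-q_{0}$ for the remainder summation) is accurate. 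Since you yourself conclude by deferring to Proposition~B.8 of \cite{lannes}, there is no discrepancy: you are reconstructing the very proof the paper cites.
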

 For the reader's convenience we now restate the two estimates of the above proposition as follows.  Let  $\Lambda^{s}=\left(1-D_{x}^{2}\right)^{s/2}$ and take $w\in H^{s+1}$, $g\in H^{s-1}$ and $h\in H^{s}$. Then, for $ q_{0}=s$,  the first estimate above  yields
\begin{equation}
    \langle \lbrack \Lambda^{s},w]g,\Lambda^{s}h\rangle \leq C\Vert w\Vert_{H^{s+1}}\Vert g \Vert_{H^{s-1}}\Vert h \Vert_{H^{s}}. \label{est1}
\end{equation}%
Similarly,  for $q_{0}=s$ and $-s<r\leq 1$,   we obtain from the second estimate that
\begin{align}
    \langle \Lambda \lbrack \Lambda ^{s},w]h,\Lambda ^{s-1}g\rangle
        \leq &C\Vert \Lambda \lbrack \Lambda^{s},w]h\Vert _{L^{2}}\Vert \Lambda ^{s-1}g\Vert _{L^{2}}  \notag \\
        \leq &C\Vert \lbrack \Lambda^{s},w]h \Vert_{H^{1}}\Vert g\Vert_{H^{s-1}}  \notag \\
        \leq &C\Vert w\Vert_{H^{s+1}}\Vert h \Vert_{H^{s}}\Vert g\Vert_{H^{s-1}}.  \label{est2}
\end{align}%
We are now ready to prove the main result for the CH approximation of the IB equation (an extension of the following theorem to the nonlocal equation will be given in Section \ref{sec5} (see  Theorem  \ref{theo5.2})):
\begin{theorem}\label{theo4.2}
    Let \ $w_{0} \in H^{s+6}(\mathbb{R}),$ $s>1/2$ and suppose that $w^{\epsilon, \delta}$ is the solution of the CH  equation (\ref{ch}) with initial value $w(x,0)=w_{0}(x)$. Then, there exist $T>0$ \ and $\delta_{1}\leq 1$ such that the solution $u^{\epsilon ,\delta }$ of the Cauchy problem for the IB equation
    \begin{align*}
        &  u_{tt}-u_{xx}-\delta ^{2}u_{xxtt}-\epsilon (u^{2})_{xx} =0 \\
        &  u(x,0) =w_{0}(x) ,~~~~ u_{t}(x,0)=w_{t}^{\epsilon ,\delta }(x, 0),
    \end{align*}
    satisfies
    \begin{equation*}
        \Vert u^{\epsilon, \delta}(t)-w^{\epsilon, \delta}(t) \Vert_{H^{s}}\leq ~C\left(\epsilon^{2} +\delta^{4}\right) t
    \end{equation*}
    for all $t\in \left[ 0,\frac{T}{\epsilon }\right] $ and all $0<\epsilon \leq \delta \leq \delta_{1}$.
\end{theorem}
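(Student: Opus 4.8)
The plan is to set up an energy estimate for the error function $z = u - w$, where $u$ solves the IB equation and $w$ solves the CH equation with the matched initial data. Since $w$ satisfies the IB equation up to the residual $F_x$ (Lemma \ref{lem3.1}), subtracting the two equations gives a linear equation for $z$ with a quadratic forcing in $z$, an error source term $-F_x$, and coefficients built from $w$ (which are uniformly bounded in high Sobolev norms by Corollary \ref{cor2.1}). Precisely, $z_{tt} - z_{xx} - \delta^2 z_{xxtt} - \epsilon \big( (u^2 - w^2)\big)_{xx} = -F_x$, and writing $u^2 - w^2 = (u+w)z$ turns the nonlinear term into a linear-in-$z$ term with coefficient $u+w = z + 2w$. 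I would first record the a priori bound on $\|u\|_{H^s}$ (hence on $u+w$) that is valid as long as the error stays bounded — a continuity/bootstrap argument will close this at the end.

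The core is the choice of energy. Following the Constantin–Lannes / BBM-type approach referenced in the paper, I would apply $\Lambda^s$ to the error equation, pair with $\Lambda^s z_t$, and use the structure $D_t^2 - D_x^2 - \delta^2 D_x^2 D_t^2 = (1 - \delta^2 D_x^2) D_t^2 - D_x^2$. This suggests the natural energy
\[
E(t) = \tfrac12 \Big( \|\Lambda^s z_t\|_{L^2}^2 + \delta^2 \|\Lambda^s z_{xt}\|_{L^2}^2 + \|\Lambda^s z_x\|_{L^2}^2 \Big),
\]
which controls $\|z_t\|_{H^s}$ and $\|z_x\|_{H^s}$; combined with control of $\|z\|_{L^2}$ (obtained by also pairing with $\Lambda^s z$ or by integrating $z_t$ in time, using $z(0)=0$, $z_t(0)=0$) this yields $\|z\|_{H^s}$. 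Differentiating $E$ and substituting the equation, the leading terms cancel by antisymmetry, and one is left with: (a) the forcing contribution $\langle \Lambda^s F_x, \Lambda^s z_t\rangle \le \|F\|_{H^s}\|z_t\|_{H^{s+1}}$ — but $z_t$ appears at order $s+1$, which is not controlled by $E$, so one must instead integrate by parts to move the $D_x$ off $F$ onto $z_t$ or, better, bound $\langle \Lambda^s F_x, \Lambda^s z_t\rangle$ after an integration by parts in $t$ and absorb it, using $\|F\|_{H^s} \le C(\epsilon^2+\delta^4)$; (b) the quadratic/nonlinear commutator terms $\epsilon \langle \Lambda^s((z+2w)z)_{xx}, \Lambda^s z_t\rangle$, which after integration by parts become $-\epsilon\langle \Lambda^s((z+2w)z)_x, \Lambda^s z_{xt}\rangle$ — here the commutator estimates \eqref{est1}–\eqref{est2} are used to trade derivatives between the coefficient $z+2w$ and the factors $z$, $z_{xt}$, yielding a bound of the form $C\epsilon(\|w\|_{H^{s+1}} + \|z\|_{H^s}) E(t)$.

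Assembling these, one gets a differential inequality $\frac{d}{dt} E \le C\epsilon\, E + C(\epsilon^2+\delta^4)\sqrt{E} + \text{(lower order)}$, equivalently $\frac{d}{dt}\sqrt{E} \le C\epsilon \sqrt E + C(\epsilon^2+\delta^4)$; with $E(0)=0$, Gronwall gives $\sqrt{E(t)} \le C(\epsilon^2+\delta^4)\,\frac{e^{C\epsilon t}-1}{C\epsilon} \le C(\epsilon^2+\delta^4)\, t\, e^{CT}$ on $[0,T/\epsilon]$, hence $\|z(t)\|_{H^s} \le C(\epsilon^2+\delta^4)\,t$. The main obstacle — and the point requiring care — is the bootstrap: the a priori bounds on $\|u\|_{H^s}$ and the use of the $L^\infty$ blow-up criterion for the IB equation require that $\|z\|_{H^s}$ (and thus $\|u\|_{L^\infty}$) stay $O(1)$ on the whole interval $[0,T/\epsilon]$; since the error bound $C(\epsilon^2+\delta^4)t$ is itself $O(\epsilon)$ small at $t=T/\epsilon$ (because $\epsilon\le\delta$, so $(\epsilon^2+\delta^4)/\epsilon \le \epsilon + \delta^3 \to 0$), one chooses $\delta_1$ small enough that this stays below the continuity threshold, closing the argument and simultaneously extending $T^{\epsilon,\delta}$ up to $T/\epsilon$. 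A secondary technical point is handling the top-order term coming from $F_x$ against $z_t$, which is why the energy is phrased in terms of $z_t, z_x$ rather than $z$ directly, and why an integration by parts in time (moving $D_t$ onto $F$, using $\|F_t\|_{H^s}$-type bounds that also follow from Corollary \ref{cor2.1} with $k=5$) may be needed to avoid losing a derivative.
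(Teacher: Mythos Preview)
Your overall strategy---an energy estimate for the error $z=u-w$, Gronwall's inequality, and a bootstrap on $\|z\|_{H^s}$ to push the existence time out to $T/\epsilon$---is the same as the paper's, and your closing bootstrap argument is essentially correct. However, there is a genuine gap in the treatment of the nonlinear term, and the paper's proof rests on two devices that are absent from your proposal.

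The crucial unjustified step is the claim that the nonlinear contribution is bounded by $C\epsilon E(t)$. With your energy, after integration by parts the term $\epsilon\langle\Lambda^s((z+2w)z)_x,\Lambda^s z_{xt}\rangle$ pairs against $\Lambda^s z_{xt}$, and $\|z_{xt}\|_{H^s}$ only enters $E$ with a factor $\delta^{2}$, so $\|z_{xt}\|_{H^s}\le\delta^{-1}\sqrt{2E}$. The commutator estimates (\ref{est1})--(\ref{est2}) let you gain a derivative on the \emph{coefficient} $z+2w$, not on the top-order factor $z_{xt}$; the resulting bound is $C(\epsilon/\delta)E$, not $C\epsilon E$. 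Gronwall then produces a factor $e^{C\epsilon t/\delta}$, which on $[0,T/\epsilon]$ is $e^{CT/\delta}$ and blows up as $\delta\to 0$. One obtains the desired estimate only on the shorter interval $[0,\delta T/\epsilon]$; this is precisely the content of Remark~\ref{rem4.2}.

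The paper closes this gap with two ingredients. First, it introduces a potential $\rho$ with $r=\rho_x$ (possible because $r(\cdot,0)=r_t(\cdot,0)=0$), so that the error equation becomes $(1-\delta^2 D_x^2)\rho_{tt}-r_x-\epsilon(r^2+2wr)_x=-F$ with $F$ rather than $F_x$ on the right; the residual then pairs directly as $|\langle\Lambda^s F,\Lambda^s\rho_t\rangle|\le\|F\|_{H^s}\|\rho_t\|_{H^s}$ with no derivative loss and no need for bounds on $F_t$. Second, following \cite{gallay}, it augments the natural energy $\tfrac12(\|\rho_t\|_{H^s}^2+\delta^2\|r_t\|_{H^s}^2+\|r\|_{H^s}^2)$ by the correction terms $\epsilon\langle\Lambda^s(wr),\Lambda^s r\rangle+\tfrac{\epsilon}{2}\langle\Lambda^s r^2,\Lambda^s r\rangle$. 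When differentiated in time, these are designed to cancel the dangerous contribution $-\epsilon\langle\Lambda^s(r^2+2wr),\Lambda^s r_t\rangle$ up to commutators; the residual commutator terms contain $r_t$ only in $H^{s-1}$, and since $r_t=\rho_{xt}$ one has $\|r_t\|_{H^{s-1}}\le\|\rho_t\|_{H^s}\le CE_s$. This is exactly what yields $\tfrac{d}{dt}E_s^2\le C\epsilon E_s^2+C(\epsilon^2+\delta^4)E_s$ with coefficient $\epsilon$ rather than $\epsilon/\delta$, and is the heart of the proof.
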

\begin{proof}
    We fix the parameters $\epsilon$ and  $\delta  $ such that $0<\epsilon \leq \delta \leq 1$. Let $r=u-w$. We define
    \begin{equation}
        T_{0}^{\epsilon, \delta }
            =\sup \left\{ t\leq T^{\epsilon ,\delta }:\left\Vert r(\tau) \right\Vert _{H^{s}}\leq 1 ~~\mbox{for all }~ \tau \in [0, t]\right\} . \label{patlama}
    \end{equation}%
    We note that either  $\left\Vert r \left( T_{0}^{\epsilon, \delta }\right) \right\Vert _{H^{s}}=1$ or $T_{0}^{\epsilon, \delta }=T^{\epsilon, \delta }$. Moreover, in the latter case we must have $T_{0}^{\epsilon, \delta }=T^{\epsilon, \delta }=T/\epsilon$ by the discussion above about for the maximal time $T_{\max }^{\epsilon, \delta }.$ For the rest of the proof we will drop the superscripts $\epsilon, \delta $ to simplify the notation. Henceforth, we will take $t\in [0, T_{0}^{\epsilon, \delta}]$.  Obviously, the function  $r=u-w$ satisfies the initial conditions $r(x, 0) =r_{t}(x, 0) =0$. Furthermore, it satisfies the evolution equation
    \begin{equation*}
        \left( 1-\delta ^{2}D_{x}^{2}\right) r_{tt}-r_{xx}-\epsilon \left(r^{2}+2wr\right) _{xx}=-F_{x},
    \end{equation*}%
    with the residual term  $F_{x}=w_{tt}-w_{xx}-\delta ^{2}w_{xxtt}-\epsilon (w^{2})_{xx}$  that was already estimated in (\ref{saa})).  We define a function $\rho $ so that $r=\rho_{x}$ with $\rho(x,0) =\rho_{t}(x,0) =0$. This is possible since $r$ satisfies  the initial conditions $r(x,0) =r_{t}(x, 0) =0$  (see \cite{duruk2} for details).  In what follows we will use both $\rho$ and $r$  to  further simplify the calculation.  The above equation then becomes
    \begin{equation}
        \left( 1-\delta ^{2}D_{x}^{2}\right) \rho _{tt}-r_{x}-\epsilon \left(r^{2}+2wr\right) _{x}=-F . \label{rhor}
    \end{equation}%
    Motivated by the approach in \cite{gallay}, we define the "energy" as
    \begin{eqnarray}
        E_{s}^{2}(t)
        &=& \frac{1}{2}\left( \left\Vert \rho_{t}(t) \right\Vert_{H^{s}}^{2}+\delta^{2}\left\Vert r_{t}(t) \right\Vert_{H^{s}}^{2}+\left\Vert r(t) \right\Vert_{H^{s}}^{2}\right)
        +\epsilon \left\langle \Lambda^{s}(w(t) r(t)), \Lambda^{s}r(t) \right\rangle   \nonumber \\
        && +\frac{\epsilon }{2}\left\langle \Lambda^{s}r^{2}(t), \Lambda^{s}r(t) \right\rangle.  \label{ener}
    \end{eqnarray}%
    Note that
    \begin{equation*}
        \left\vert \left\langle \Lambda^{s}(wr), \Lambda^{s}r\right\rangle\right\vert \leq C\left\Vert r(t) \right\Vert_{H^{s}}^{2}, ~~~\mbox{and}~~~
        \left\vert \left\langle \Lambda^{s}r^{2}, \Lambda^{s}r\right\rangle\right\vert \leq \left\Vert r(t) \right\Vert_{H^{s}}^{3} \leq \left\Vert r(t) \right\Vert_{H^{s}}^{2},
    \end{equation*}
    where we have used (\ref{patlama}).  Thus, for sufficiently small values of $\epsilon$, we have
    \begin{equation*}
        E_{s}^{2}\left( t\right)
            \geq \frac{1}{4}\left( \left\Vert \rho_{t}\right\Vert_{H^{s}}^{2}+\delta^{2}\left\Vert r_{t}\right\Vert_{H^{s}}^{2}
                +\left\Vert r\right\Vert_{H^{s}}^{2}\right),
    \end{equation*}%
    which shows that $E_{s}^{2}(t) $ is positive definite. The above result also shows that an estimate obtained for $E_{s}^{2}$ gives an estimate for $\left\Vert r(t)\right\Vert_{H^{s}}^{2}$. Differentiating $E_{s}^{2}(t)$ with respect to $t$ and using (\ref{rhor}) to eliminate the term $\rho_{tt}$ from the resulting equation we get
    \begin{align}
        \frac{d}{dt}E_{s}^{2}
        =&\frac{d}{dt}\left( \epsilon \left\langle \Lambda^{s}(wr),\Lambda^{s}r\right\rangle
                    +\frac{\epsilon }{2}\left\langle \Lambda^{s}r^{2},\Lambda^{s}r\right\rangle\right)
                    -\epsilon \left\langle \Lambda^{s}(r^{2}+2wr),\Lambda^{s}r_{t}\right\rangle \nonumber \\
        &            -\left\langle \Lambda^{s}F,\Lambda^{s}\rho_{t}\right\rangle  \nonumber \\
        =&\epsilon \left[ \left\langle \Lambda^{s}(w_{t}r),\Lambda^{s}r\right\rangle -\left\langle \Lambda^{s}(wr),\Lambda^{s}r_{t}\right\rangle
                    +\left\langle \Lambda^{s}r,\Lambda^{s}(wr_{t})\right\rangle + \left\langle \Lambda^{s}(rr_{t}),\Lambda^{s}r\right\rangle
                   \right. \nonumber \\
        &         \left.    -\frac{1}{2}\left\langle \Lambda^{s} r^{2}, \Lambda^{s} r_{t}\right\rangle \right]
                  -\left\langle \Lambda^{s}F,\Lambda^{s}\rho_{t}\right\rangle . \label{decom}
    \end{align}%
    The  first term in the parentheses  and the last term  are  estimated as
    \begin{align*}
         \left\langle \Lambda^{s}(w_{t}r),\Lambda^{s}r\right\rangle
            \leq &C\left\Vert r\right\Vert_{H^{s}}^{2}\leq C E_{s}^{2}  \\
        \left\langle \Lambda^{s}F, \Lambda^{s}\rho_{t}\right\rangle
            \leq &\Vert F\Vert_{H^{s}}\Vert \rho_{t}\Vert_{H^{s}} \leq C\left( \epsilon ^{2}+\delta ^{4}\right)E_{s},
    \end{align*}
    respectively, where we have used Lemma \ref{lem3.1}.     We rewrite the second and the third terms in the parentheses  in (\ref{decom}) as
    \begin{align}
        -\left\langle \Lambda^{s}(wr),\Lambda^{s}r_{t}\right\rangle+\left\langle \Lambda^{s}r,\Lambda^{s}(wr_{t})\right\rangle
        =&\int \left[ -\Lambda^{s}(wr) \Lambda^{s}r_{t}+\Lambda^{s}r\Lambda^{s}(wr_{t})\right] dx \nonumber \\
        =&-\langle \lbrack \Lambda ^{s},w]r,\Lambda ^{s}r_{t}\rangle +\langle \lbrack \Lambda ^{s},w]r_{t},\Lambda^{s}r\rangle. \label{liz}
    \end{align}%
     Furthermore,  using the commutator estimates (\ref{est1})-(\ref{est2}) we get the following estimates for the two terms in (\ref{liz}):
    \begin{align}
        \langle \lbrack \Lambda ^{s},w]r,\Lambda ^{s}r_{t}\rangle
        =&\langle \Lambda \lbrack \Lambda ^{s},w]r,\Lambda ^{s-1}r_{t}\rangle
                \leq C\Vert w\Vert _{H^{s+1}}\Vert r\Vert _{H^{s}}\Vert r_{t}\Vert _{H^{s-1}},  \label{comw2} \\
        \langle \lbrack \Lambda ^{s},w]r_{t},\Lambda ^{s}r\rangle
        \leq &C\Vert w\Vert _{H^{s+1}}\Vert r\Vert _{H^{s}}\Vert r_{t}\Vert _{H^{s-1}}.%
    \end{align}%
    We rewrite the fourth and fifth terms in  the parentheses  in (\ref{decom}) as
    \begin{align*}
        \left\langle \Lambda^{s}(rr_{t}),\Lambda^{s}r\right\rangle -& \frac{1}{2}\left\langle \Lambda^{s}r^{2},\Lambda^{s}r_{t}\right\rangle  \\
        =&\left\langle \Lambda^{s-1} \left( 1-D_{x}^{2}\right)r,\Lambda^{s-1} (rr_{t})\right\rangle
            -\frac{1}{2}\left\langle \Lambda^{s-1} \left(1-D_{x}^{2}\right) r^{2},\Lambda^{s-1}r_{t}\right\rangle \\
        =& \left\langle \Lambda^{s-1}r,\Lambda^{s-1}(rr_{t})\right\rangle
            -\frac{1}{2}\left\langle \Lambda^{s-1} (r^{2}-2r_{x}^{2}),\Lambda^{s-1}r_{t}\right\rangle \\
        & -\left( \left\langle \Lambda^{s-1}(rr_{t}),\Lambda^{s-1}r_{xx}\right\rangle
            -\left\langle \Lambda^{s-1}r_{t}, \Lambda^{s-1} (rr_{xx})\right\rangle\right).
    \end{align*}%
    Then, if we group the first two terms together and the last two terms together in the above equation, we obtain the following estimates
    \begin{align*}
        \left\vert \left\langle \Lambda^{s-1}r,\Lambda^{s-1}(rr_{t})\right\rangle
            -\frac{1}{2}\left\langle \Lambda^{s-1}(r^{2}-2r_{x}^{2}),\Lambda^{s-1}r_{t}\right\rangle\right\vert
        \leq &  C\Vert r\Vert _{H^{s-1}}^{2}\Vert r_{t}\Vert _{H^{s-1}}, \\
         \leq & C\Vert r\Vert _{H^{s}}^{2}\Vert r_{t}\Vert _{H^{s-1}},  \\
        \left\vert \left\langle \Lambda^{s-1}(rr_{t}),\Lambda^{s-1}r_{xx}\right\rangle
            -\left\langle \Lambda^{s-1}r_{t},\Lambda^{s-1}(rr_{xx})\right\rangle\right\vert
        \leq &C\Vert r\Vert _{H^{s}}\Vert r_{t}\Vert _{H^{s-1}}\Vert r_{xx}\Vert _{H^{s-2}} \\
        \leq &C\Vert r\Vert _{H^{s}}^{2}\Vert r_{t}\Vert _{H^{s-1}}.
    \end{align*}%
    Note that the second line follows from (\ref{liz}) and ( \ref{comw2}) where $w,r,r_{t}$ are replaced, respectively, by $r,r_{t},r_{xx}$ and $s$ by $s-1$.  Also, we remind that
    \begin{equation*}
        \Vert r_{t}\Vert _{H^{s-1}}=\Vert \rho _{xt}\Vert _{H^{s-1}}\leq \Vert \rho_{t}\Vert _{H^{s}}\leq C E_{s}\left( t\right)
    \end{equation*}%
    and   $\Vert r\Vert _{H^{s}}\leq 1$.  Combining all the above results we get from (\ref{decom}) that
    \begin{equation*}
        \frac{d}{dt}E_{s}^{2}(t)
            \leq C\left( \epsilon E_{s}^{2}(t) +\left( \epsilon^{2}+\delta^{4}\right)E_{s}(t) \right) .
    \end{equation*}%
    As $E_{s}(0) =0$, Gronwall's inequality yields
    \begin{align*}
        E_{s}(t) \leq & \frac{\epsilon^{2}+\delta^{4}}{\epsilon }\left( e^{C\epsilon t}-1\right)
                    \leq Ce^{CT}\left( \epsilon^{2}+\delta^{4}\right)t \\
                  \leq &   C^{\prime}\left( \epsilon^{2}+\delta^{4}\right) t\text{ \ \ \ for \ \ }t\leq T_{0}^{\epsilon, \delta}\leq {T\over \epsilon}.
    \end{align*}
    Finally recall that $T_{0}^{\epsilon, \delta}$ was determined by the condition (\ref{patlama}).   The above estimate shows that
    $\left\Vert r\left(T_{0}^{\epsilon, \delta}\right)\right\Vert_{s}\leq C^{\prime}\left(\epsilon^{2}+\delta^{4}\right) T_{0}^{\epsilon, \delta}<1$ for $\epsilon\leq \delta$ small enough.  Then $T_{0}^{\epsilon, \delta}=T^{\epsilon, \delta }$ and furthermore $T^{\epsilon, \delta }=\frac{T}{\epsilon }$, and this concludes the proof.
\end{proof}

We want to conclude with some remarks about the above proof.
\begin{remark}\label{rem4.1}
     Theorem \ref{theo4.2} shows that the approximation error  is $\mathcal{O}\left((\epsilon^{2}+\delta^{4})t\right)$ for times of order $\mathcal{O}
      ({1\over \epsilon})$. Consequently, the CH approximation provides a good approximation to the solution of the IB equation for large times.
\end{remark}
\begin{remark}\label{rem4.2}
     The key step is to use   the extra $\epsilon $ terms in the energy $E_{s}^{2}$, where we have adopted the approach in \cite{gallay}. This allows us to replace $\Vert r_{t}\Vert_{H^{s}}$ by $\Vert r_{t}\Vert_{H^{s-1}}$ hence avoiding the loss of  $\delta $ in our estimates. The proofs  in \cite{gallay}  work for integer values of $s$, whereas via commutator estimates our result holds for general  $s$.   The standard approach of taking the energy as
      \begin{equation*}
        \tilde{E}_{s}^{2}(t)
        = \frac{1}{2}\left( \left\Vert \rho_{t}(t) \right\Vert_{H^{s}}^{2}+\delta^{2}\left\Vert r_{t}(t) \right\Vert_{H^{s}}^{2}+\left\Vert r(t) \right\Vert_{H^{s}}^{2}\right)  \label{ener-stan}
    \end{equation*}%
      would give the estimate
      \begin{equation*}
       \tilde{E}_{s}(t) \leq \left(\epsilon^{2}+\delta^{4}\right){\epsilon \over \delta}\left(e^{C{\epsilon \over \delta}t}-1\right),
    \end{equation*}%
    in turn implying $\tilde{E}_{s}(t) \leq C^{\prime}\left(\epsilon^{2}+\delta^{4}\right)t$   for times $t\leq \frac{\delta }{\epsilon }T$, that is, only for relatively shorter times.
\end{remark}

\section{The Nonlocal Wave Equation}\label{sec5}

In this section we return to the nonlocal equation (\ref{nw}) and extend the analysis of the previous sections concerning the IB equation (\ref{ib}) to (\ref{nw}).  We will very briefly sketch  the main features of the nonlocal equation,  referring the reader to \cite{duruk2} for more details. In \cite{duruk2}, for the propagation of strain waves in a one-dimensional, homogeneous, nonlinearly and nonlocally elastic infinite medium the following  wave equation was proposed (here we restrict our attention to the quadratically nonlinear equation):
\begin{equation}
    U_{\tau\tau}=\beta \ast ( U+ U^{2})_{XX} \label{origin}
\end{equation}
where $U=U(X,\tau)$ is a real-valued function. Following the assumptions in \cite{duruk2}, the kernel function $\beta(X)$ is even and its  Fourier transform satisfies the ellipticity condition
\begin{equation}
    c_{1}\left( 1+\eta^{2}\right)^{-r/2}\leq \widehat{\beta}(\eta) \leq c_{2}\left( 1+\eta^{2}\right)^{-r/2}  \label{order}
\end{equation}%
for some $c_{1},c_{2}>0$ and $r\geq 2$, where $\eta$ is the Fourier variable corresponding to $X$. Then the convolution can be considered as an invertible pseudodifferential operator of order $r$.  The following result on the local well-posedness of the Cauchy problem was originally given in \cite{duruk2}:
\begin{theorem}\label{theo5.1}
    Let $r\geq 2$ and  $s>1/2$. For $U_{0}, U_{1}\in H^{s}(\mathbb{R})$, there is some $\tau^{*}>0$ such that  the Cauchy problem for (\ref{origin})  with initial values $U(X,0) =U_{0}(X)$,  $U_{\tau}(X,0) =U_{1}(X)$ has a unique solution $U\in $  $C^{2}([0,\tau^{*}],H^{s}(\mathbb{R}))$.
\end{theorem}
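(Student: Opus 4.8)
The plan is to recast the Cauchy problem for (\ref{origin}) as a second-order ordinary differential equation in the Banach space $H^s(\mathbb{R})$ and solve it by a contraction-mapping argument. The structural fact that makes this work is that, \emph{because} $r\geq 2$ in the ellipticity condition (\ref{order}), the linear operator $K:=\beta\ast D_X^{2}$ is bounded on every Sobolev space. Its Fourier symbol is $-\eta^{2}\widehat\beta(\eta)$, so by (\ref{order})
\begin{equation*}
\left|\widehat{Ku}(\eta)\right|=\eta^{2}\left|\widehat\beta(\eta)\right|\,|\widehat u(\eta)|\leq c_{2}\,\eta^{2}\left(1+\eta^{2}\right)^{-r/2}|\widehat u(\eta)|\leq c_{2}\,|\widehat u(\eta)|,
\end{equation*}
hence $\Vert Ku\Vert_{H^{s}}\leq c_{2}\Vert u\Vert_{H^{s}}$ for all $s$; in fact $K$ is smoothing of order $r-2\geq 0$. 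Thus there is no loss of derivatives, and for the purposes of local well-posedness the equation $U_{\tau\tau}=K(U+U^{2})$ behaves exactly like an ODE.

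Next I would integrate twice in $\tau$ to pass to the equivalent integral equation
\begin{equation*}
U(\tau)=U_{0}+\tau U_{1}+\int_{0}^{\tau}(\tau-\sigma)\,K\!\left(U(\sigma)+U^{2}(\sigma)\right)d\sigma=:\Phi(U)(\tau),
\end{equation*}
and seek a fixed point of $\Phi$ in the closed ball of radius $R:=2\bigl(\Vert U_{0}\Vert_{H^{s}}+\tau^{*}\Vert U_{1}\Vert_{H^{s}}\bigr)$ in $C([0,\tau^{*}],H^{s}(\mathbb{R}))$. The only nonlinearity is $U^{2}$, which is handled by the fact that $H^{s}(\mathbb{R})$ is a Banach algebra for $s>1/2$: one has $\Vert U^{2}\Vert_{H^{s}}\leq C_{s}\Vert U\Vert_{H^{s}}^{2}$ and $\Vert U^{2}-W^{2}\Vert_{H^{s}}\leq C_{s}\bigl(\Vert U\Vert_{H^{s}}+\Vert W\Vert_{H^{s}}\bigr)\Vert U-W\Vert_{H^{s}}$. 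Combining this with $\Vert K\cdot\Vert_{H^{s}}\leq c_{2}\Vert\cdot\Vert_{H^{s}}$ and the factor $\int_{0}^{\tau}(\tau-\sigma)\,d\sigma=\tau^{2}/2$ produced by the double time integration, one obtains, for $U,W$ in the ball,
\begin{equation*}
\Vert\Phi(U)-\Phi(W)\Vert_{C([0,\tau^{*}],H^{s})}\leq C\,(\tau^{*})^{2}\,(1+R)\,\Vert U-W\Vert_{C([0,\tau^{*}],H^{s})},
\end{equation*}
together with a similar bound showing $\Phi$ maps the ball into itself. Choosing $\tau^{*}>0$ small enough, depending only on $\Vert U_{0}\Vert_{H^{s}}$, $\Vert U_{1}\Vert_{H^{s}}$, $c_{2}$ and $s$, makes $\Phi$ a contraction, and the Banach fixed-point theorem produces a unique $U\in C([0,\tau^{*}],H^{s}(\mathbb{R}))$ solving the integral equation.

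Finally I would bootstrap the time regularity. The right-hand side of the integral equation is continuously differentiable in $\tau$ with values in $H^{s}$, so $U_{\tau}(\tau)=U_{1}+\int_{0}^{\tau}K(U+U^{2})\,d\sigma\in C([0,\tau^{*}],H^{s})$ and then $U_{\tau\tau}=K(U+U^{2})\in C([0,\tau^{*}],H^{s})$, whence $U\in C^{2}([0,\tau^{*}],H^{s}(\mathbb{R}))$ and it solves (\ref{origin}) classically; conversely any such classical solution satisfies the integral equation, which yields uniqueness in this class as well.

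There is no genuine analytic obstacle here, which is why the theorem is quoted for reference rather than reproved in detail: the hypothesis $r\geq 2$ enters only through the boundedness of $K$, and $s>1/2$ enters only through the algebra property of $H^{s}$. (If one had $r<2$, the operator $K$ would carry a derivative loss of order $2-r$, and the simple double-integration fixed point would have to be replaced by an energy argument on the first-order system, as is done later in the comparison estimates; but under the stated assumptions this is unnecessary.)
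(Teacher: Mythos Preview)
Your argument is correct and is precisely the standard route: exploit that $r\geq 2$ makes $K=\beta\ast D_X^{2}$ a bounded operator on $H^{s}$, use the algebra property of $H^{s}$ for $s>1/2$ to control the quadratic term, and run a Picard iteration on the twice-integrated equation. The paper does not give its own proof of this statement; it simply cites \cite{duruk2}, where exactly this type of contraction argument is carried out, so your proposal matches the intended approach.
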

Moreover, as in the case of the IB equation,  the $L^{\infty }$ blow-up condition
\begin{equation*}
    \lim_{\tau\rightarrow \tau_{\max }^{-}} \sup\left\Vert U(\tau)\right\Vert_{L^{\infty }}=\infty
\end{equation*}
determines the maximal existence time if it is finite.

We  note that, under the transformation defined by
\begin{equation}
 U(X, \tau) =\epsilon u(x, t), ~~~~ x=\delta X, ~~~~t=\delta \tau,  \label{scale}
\end{equation}
(\ref{origin}) becomes (\ref{nw}) with $\beta_{\delta }(x) =\frac{1}{\delta }\beta(X)=\frac{1}{\delta }\beta ( \frac{x}{\delta })$. Recall that the functional relationship between the Fourier transforms of $\beta(X)$ and $\beta_{\delta}(x)$ is as follows: $\widehat{\beta}(\eta)=\widehat{\beta}(\delta \xi)=\widehat{\beta_{\delta}}(\xi)$ where $\xi$ is the Fourier variable corresponding to $x$.  Theorem \ref{theo5.1} applies for (\ref{nw}) with  $t\in [0, T^{\epsilon, \delta}]$.   Note that if we choose the kernel function in the form $\beta_{\delta}(x)= \frac{1}{2\delta}e^{- \left\vert x\right\vert/\delta}$ (in which $\beta (X) =\frac{1}{2}e^{-\left\vert X\right\vert }$, $~\widehat{\beta}(\eta) =\left( 1+\eta^{2}\right)^{-1}$ and  $\widehat{\beta_{\delta}}(\xi) =\left( 1+\delta^{2}\xi^{2}\right)^{-1}$ ), then (\ref{nw})  recovers the IB equation  (\ref{ib}).

Our aim is to prove that, in the long-wave limit, the unidirectional solutions of the nonlocal equation are  well approximated by the solutions of the CH equation under certain minimal conditions on $\beta$ (equivalently on $\beta_{\delta}$). From now on, we will make the following assumptions on the moments of $\beta$:
\begin{equation}
        \int \beta(X) dX=1,\text{ }\int X^{2}\beta(X) dX=2, \text{ }   \int X^{4}|\beta(X)| dX<\infty. \label{beta-con}
    \end{equation}%
\begin{proposition}\label{prop5.3}
    Suppose that $\beta$ satisfies the conditions in (\ref{beta-con}). Then there is a continuous function $m$ such that
    \begin{equation}
        \frac{1}{\widehat{\beta }(\eta) }=1+\eta^{2}+\eta^{4}m(\eta).  \label{m}
    \end{equation}%
\end{proposition}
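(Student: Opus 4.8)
The plan is to Taylor-expand $\widehat{\beta}$ to fourth order about the origin and then invert. Since $\int|X|^{k}|\beta(X)|\,dX\le\int(1+X^{4})|\beta(X)|\,dX<\infty$ for $0\le k\le 4$, one may differentiate under the integral sign up to fourth order to obtain $\widehat{\beta}\in C^{4}$, with $\widehat{\beta}^{(4)}(\eta)=\int X^{4}\beta(X)e^{-i\eta X}\,dX$ continuous and bounded by $\int X^{4}|\beta(X)|\,dX$. Evenness of $\beta$ forces $\int X\beta=\int X^{3}\beta=0$, so together with $\int\beta=1$ and $\int X^{2}\beta=2$ the values of $\widehat{\beta},\widehat{\beta}',\widehat{\beta}'',\widehat{\beta}'''$ at $0$ are $1,0,-2,0$. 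The integral form of Taylor's remainder then yields
\[
    \widehat{\beta}(\eta)=1-\eta^{2}+\eta^{4}g(\eta),\qquad
    g(\eta)=\frac{1}{6}\int_{0}^{1}(1-s)^{3}\,\widehat{\beta}^{(4)}(s\eta)\,ds ,
\]
and $g$ is continuous by dominated convergence, since $\widehat{\beta}^{(4)}$ is continuous and uniformly bounded.

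The next ingredient is the ellipticity condition (\ref{order}): with $c_{1}>0$ it gives $\widehat{\beta}(\eta)\ge c_{1}(1+\eta^{2})^{-r/2}>0$ for every $\eta$, so $1/\widehat{\beta}$ is continuous on $\mathbb{R}$. Setting $m(\eta)=\bigl(\widehat{\beta}(\eta)^{-1}-1-\eta^{2}\bigr)/\eta^{4}$ for $\eta\ne 0$, it remains only to check that this extends continuously across $\eta=0$. Using the expansion above and the identity $(1+\eta^{2})(1-\eta^{2})=1-\eta^{4}$, one computes
\[
    \widehat{\beta}(\eta)^{-1}-1-\eta^{2}
    =\frac{1-(1+\eta^{2})\widehat{\beta}(\eta)}{\widehat{\beta}(\eta)}
    =\frac{\eta^{4}\bigl(1-(1+\eta^{2})g(\eta)\bigr)}{\widehat{\beta}(\eta)} ,
\]
so one may simply define
\[
    m(\eta)=\frac{1-(1+\eta^{2})g(\eta)}{\widehat{\beta}(\eta)} ,
\]
which is continuous because $g$ is continuous and $\widehat{\beta}$ is continuous and nowhere zero. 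This proves (\ref{m}).

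I do not anticipate a genuine obstacle: the one point needing care is making sure the fourth-order remainder $g$ is \emph{continuous} rather than merely bounded, which is why I would invoke the integral form of Taylor's theorem (using precisely the finite fourth moment hypothesis) instead of a Peano or Lagrange remainder; the rest is elementary algebra together with the positivity supplied by (\ref{order}). If one prefers not to differentiate $\widehat{\beta}$, the same expansion can be reached by inserting the degree-three Taylor polynomial of $e^{-i\eta X}$ with its continuous integral remainder directly into $\widehat{\beta}(\eta)=\int\beta(X)e^{-i\eta X}\,dX$ and using the moment conditions.
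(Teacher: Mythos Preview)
Your proof is correct and rests on the same idea as the paper's: the moment hypotheses give $\widehat{\beta}\in C^{4}$ with $\widehat{\beta}(0)=1$, $\widehat{\beta}''(0)=-2$, and vanishing odd derivatives at the origin, so the desired $\eta^{4}$ factor can be extracted. The only tactical difference is that the paper applies this reasoning directly to $1/\widehat{\beta}$ (observing that $1/\widehat{\beta}\in C^{4}$ with value $1$, second derivative $2$, and vanishing odd derivatives at $0$, whence $\bigl(1/\widehat{\beta}(\eta)-1-\eta^{2}\bigr)/\eta^{4}$ extends continuously to $\eta=0$), whereas you expand $\widehat{\beta}$ itself with an explicit integral remainder $g$ and then invert algebraically. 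Your route has the small advantages of making the continuity of the remainder manifest via dominated convergence and of invoking the positivity from (\ref{order}) explicitly, which the paper leaves implicit.
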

\begin{proof}
    Since the Fourier transform of $-iX\beta(X)$ equals $\frac{d}{d\eta}\widehat{\beta}(\eta)$, (\ref{beta-con}) implies that  $\widehat{\beta }\in C^{4}$ and
    \begin{equation}
        \widehat{\beta }(0)=\int \beta(X) dX=1,\text{ }(\widehat{\beta})^{\prime\prime}(0)=-\int X^{2}\beta(X) dX=-2.
    \end{equation}%
    Then $1/\widehat{\beta}(\eta)\in C^{4}$, $1/\widehat{\beta }(0)=1$ and  $\left(1/\widehat{\beta}\right)^{\prime\prime}(0)=2$. As $\beta$ is even, the odd moments, hence the odd derivatives of $1/\widehat{\beta}(\eta)$, vanish at $\eta=0$.  Thus the function defined as
    \begin{equation*}
    m(\eta)=\frac{\frac{1}{\widehat{\beta}(\eta)}-1-\eta^{2}}{\eta^{4}}
    \end{equation*}
    for $\eta\neq 0$ can be extended continuously to $\eta=0$.
\end{proof}
\begin{remark}\label{rem5.3}
    The above assumption is not very restrictive in our setting. For instance, if   $\int \beta(X) dX=a$ and $\int X^{2}\beta(X) dX=b>0$,
    a suitable scaling will reduce it to the above case.
\end{remark}

The lower bound in (\ref{order}) shows that
\begin{equation*}
   0< \frac{1}{\widehat{\beta }(\eta) }=1+\eta^{2}+\eta^{4}m(\eta)  \leq c_{1}^{-1}(1+\eta^{2})^{r/2}.
\end{equation*}
Thus
\begin{align*}
    \eta^{4} |m(\eta)| \leq  c_{1}^{-1}(1+\eta^{2})^{r/2}+(1+\eta^{2})\leq C(1+\eta^{2})^{r/2}.
\end{align*}
Since $m(\eta)$ is continuous, this implies
\begin{align*}
    |m(\eta)| \leq  C(1+\eta^{2})^{\frac{r-4}{2}},
\end{align*}
so that  $m$ has order $r-4$. We note that under the scaling (\ref{scale}) we have
\begin{equation}
        \frac{1}{\widehat{\beta_{\delta} }(\xi) }=1+\delta^{2}\xi^{2}+\delta^{4}\xi^{4}m(\delta\xi).  \label{mdel}
\end{equation}%

We define the pseudodifferential operators
\begin{equation*}
    M U=\mathcal{F}^{-1}\left( m(\eta) \widehat{U}(\eta) \right),\text{ \ \ \ }
    M_{\delta }u =\mathcal{F}^{-1}\left( m\left( \delta \xi \right) \widehat{u}(\xi) \right).
\end{equation*}%
When $r>4$, we have
\begin{equation*}
    |m(\delta\xi)|\leq C(1+\delta^{2}\xi^{2})^{\frac{r-4}{2}} \leq C (1+\xi^{2})^{\frac{r-4}{2}},
\end{equation*}%
so that
\begin{equation*}
    \left\Vert M_{\delta }u \right\Vert_{H^{s}} \leq C \left\Vert u \right\Vert_{H^{s+r-4}}.
\end{equation*}%
On the other hand, when $r\leq 4$, we get
\begin{equation*}
    |m(\delta\xi)|\leq C(1+\delta^{2}\xi^{2})^{\frac{r-4}{2}} \leq C,
\end{equation*}%
so that
\begin{equation*}
    \left\Vert M_{\delta }u \right\Vert_{H^{s}} \leq C \left\Vert u \right\Vert_{H^{s}}.
\end{equation*}
Thus we have the uniform estimates for $M_{\delta }u$:
\begin{equation}
    \left\Vert M_{\delta }u \right\Vert_{H^{s}} \leq C \left\Vert u \right\Vert_{H^{s+\sigma-4}},    ~~~~ \sigma=\max \{r, 4\}.  \label{mdelest}
\end{equation}%
Due to   (\ref{scale}), $MU=\epsilon M_{\delta}u$. Multiplying  (\ref{origin}) by $(1-D_{X}^{2}+D_{X}^{4}M)$ and (\ref{nw}) by $(1-\delta^{2}D_{x}^{2}+\delta^{4}D_{x}^{4}M_{\delta})$ we rewrite (\ref{origin}) and (\ref{nw})  more familiar forms
\begin{equation}
    \left(1-D_{X}^{2}+D_{X}^{4}M\right) U_{\tau\tau}-U_{XX}=\left(U^{2}\right)_{XX}  \label{IB nonlocal}
\end{equation}%
and
\begin{equation}
    \left( 1-\delta ^{2}D_{x}^{2}+\delta^{4}D_{x}^{4}M_{\delta} \right) u_{tt}-u_{xx}=\epsilon \left( u^{2}\right) _{xx}, \label{del-equ}
\end{equation}%
respectively.

When we apply the formal asymptotic approach given in \cite{eee1} to (\ref{del-equ}) (in \cite{eee1} it was used to derive the CH equation from the IB equation), we again get  exactly the same result, that is, the CH equation. As remarked in \cite{eee1}, this follows from the observation that the extra term $\delta^{4}D_{x}^{4}M_{\delta}$ will only give rise to $O\left( \delta ^{4}\right) $ terms and these terms do not affect the derivation  in \cite{eee1}. The following theorem gives the convergence of the formal asymptotic expansion and shows that  the right-going solutions of (\ref{del-equ}) (and (\ref{nw})) are well approximated by the solutions of the CH  equation.
\begin{theorem}\label{theo5.2}
    Let \ $\ w_{0}\in H^{s+\sigma+2}(\mathbb{R})$, $s>1/2$, $\sigma=\max \{r, 4\}$ and suppose $w^{\epsilon ,\delta }$ is the solution of the  CH equation (\ref{ch}) with initial value $w(x,0)=w_{0}(x)$. Then, there exist $T>0$ \ and $\delta_{1}\leq 1$  such that  the solution $u^{\epsilon, \delta}$ of the Cauchy problem for  (\ref{del-equ}) (equivalently for (\ref{nw}))
    \begin{align*}
    & \left( 1-\delta ^{2}D_{x}^{2}+\delta ^{4}D_{x}^{4}M_{\delta }\right)u_{tt}-u_{xx}-\epsilon (u^{2})_{xx} =0, \\
    & u(x,0) =w_{0}\left( x\right) ,\text{ }u_{t}\left( x,0\right)=w_{t}^{\epsilon ,\delta }\left( x,0\right),
    \end{align*}%
    satisfies
    \begin{equation*}
        \Vert u^{\epsilon, \delta}(t) -w^{\epsilon, \delta}(t) \Vert_{H^{s}}\leq ~C\left(\epsilon^{2} +\delta^{4}\right) t
    \end{equation*}%
    for all $\ t\in \left[ 0,\frac{T}{\epsilon }\right] $ and all $\ 0<\epsilon \leq \delta \leq \delta_{1}$.
\end{theorem}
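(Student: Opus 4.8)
The plan is to mimic the argument of Theorem \ref{theo4.2} essentially line by line, tracking the one new ingredient: the extra pseudodifferential term $\delta^{4}D_{x}^{4}M_{\delta}$. First I would compute the residual. Plugging $w^{\epsilon,\delta}$ into (\ref{del-equ}) gives a residual $\widetilde f = (1-\delta^{2}D_{x}^{2}+\delta^{4}D_{x}^{4}M_{\delta})w_{tt}-w_{xx}-\epsilon(w^{2})_{xx} = f + \delta^{4}D_{x}^{4}M_{\delta}w_{tt}$, where $f=F_{x}$ is the IB residual already handled in Lemma \ref{lem3.1}. The new piece is $\delta^{4}D_{x}^{4}M_{\delta}w_{tt}$; since $M_{\delta}$ commutes with $D_{x}$, we can write it as $\delta^{4}D_{x}^{3}M_{\delta}(D_{x}w_{tt})$ and, using $M_{\delta}D_{x}=D_{x}M_{\delta}$ together with a potential for $D_{x}$, pull out one derivative so that $\widetilde f = \widetilde F_{x}$ with $\widetilde F = F + \delta^{4}D_{x}^{3}M_{\delta}w_{tt}$. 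Applying the operator-norm bound (\ref{mdelest}), $\|\delta^{4}D_{x}^{3}M_{\delta}w_{tt}\|_{H^{s}} \le C\delta^{4}\|w_{tt}\|_{H^{s+3+\sigma-4}} = C\delta^{4}\|w_{tt}\|_{H^{s+\sigma-1}}$. To bound $\|w_{tt}\|_{H^{s+\sigma-1}}$ uniformly I would differentiate the CH equation in the form (\ref{wQop}) once more in $t$, using the bounded operators $\mathcal{Q}$ and $\delta^{2}\mathcal{Q}D_{x}^{2}$ exactly as in Section \ref{sec3}; this costs two extra $x$-derivatives on $w$, so Corollary \ref{cor2.1} with $k=\sigma+1$ (hence $w_{0}\in H^{s+\sigma+2}$, matching the hypothesis) yields $\|w_{tt}\|_{H^{s+\sigma-1}}\le C$. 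Combined with Lemma \ref{lem3.1} this gives the analogue $\|\widetilde F(t)\|_{H^{s}}\le C(\epsilon^{2}+\delta^{4})$ on $[0,T/\epsilon]$.

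Next I would set up the energy estimate for $r=u-w=\rho_{x}$, which now satisfies
\begin{equation*}
    \left(1-\delta^{2}D_{x}^{2}+\delta^{4}D_{x}^{4}M_{\delta}\right)\rho_{tt}-r_{x}-\epsilon(r^{2}+2wr)_{x}=-\widetilde F,
\end{equation*}
with $\rho(x,0)=\rho_{t}(x,0)=0$. The natural modification of the energy (\ref{ener}) is
\begin{equation*}
    E_{s}^{2}(t)=\frac{1}{2}\left(\|\rho_{t}\|_{H^{s}}^{2}+\delta^{2}\|r_{t}\|_{H^{s}}^{2}+\delta^{4}\langle D_{x}^{2}M_{\delta}r_{t},r_{t}\rangle_{H^{s}}+\|r\|_{H^{s}}^{2}\right)+\epsilon\langle\Lambda^{s}(wr),\Lambda^{s}r\rangle+\frac{\epsilon}{2}\langle\Lambda^{s}r^{2},\Lambda^{s}r\rangle,
\end{equation*}
the extra quadratic term being exactly what multiplying (\ref{del-equ}) by $\rho_{tt}$ and integrating produces. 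Here one must check positive definiteness: since $m(\delta\xi)$ need not be nonnegative, the term $\delta^{4}\langle D_{x}^{2}M_{\delta}r_{t},r_{t}\rangle$ is controlled in absolute value by $C\delta^{4}\|r_{t}\|_{H^{\max\{s,s+\sigma-1\}}}^{2}$, which is the wrong sign and the wrong regularity. The clean way around this is the same device used for the IB case: the ellipticity condition (\ref{order}) guarantees $1/\widehat{\beta_{\delta}}(\xi)=1+\delta^{2}\xi^{2}+\delta^{4}\xi^{4}m(\delta\xi)>0$, so the full symbol $1-\delta^{2}D_{x}^{2}+\delta^{4}D_{x}^{4}M_{\delta}=\widehat{\beta_{\delta}}^{-1}(D_{x})$ is a positive operator; hence I would instead define the $\delta$-part of the energy directly as $\frac{1}{2}\langle\Lambda^{s}\widehat{\beta_{\delta}}^{-1}(D_{x})\rho_{tt}$-type quantity—concretely $\frac12\big(\|\rho_t\|_{H^s}^2+\langle(\widehat{\beta_\delta}^{-1}(D_x)-1)\rho_t,\rho_t\rangle_{H^s}\big)$—which is manifestly $\ge\frac12\|\rho_t\|_{H^s}^2$ and also dominates $\delta^{2}\|r_{t}\|_{H^{s}}^{2}$ from below because $1/\widehat{\beta_\delta}(\xi)\ge 1+\delta^2\xi^2$.

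Then I would differentiate $E_{s}^{2}$, use the equation to eliminate $\rho_{tt}$, and observe that the nonlinear terms are identical to those in Theorem \ref{theo4.2} — the commutator estimates (\ref{est1})--(\ref{est2}) apply verbatim, and crucially the bound $\|r_{t}\|_{H^{s-1}}\le\|\rho_{t}\|_{H^{s}}\le CE_{s}$ still holds. The forcing term contributes $-\langle\Lambda^{s}\widetilde F,\Lambda^{s}\rho_{t}\rangle\le\|\widetilde F\|_{H^{s}}\|\rho_{t}\|_{H^{s}}\le C(\epsilon^{2}+\delta^{4})E_{s}$ by the residual bound above. No term involving the extra operator $M_{\delta}$ survives the differentiation except the time derivative of the positive quadratic form itself, which cancels against the corresponding piece coming from $\langle(1-\delta^{2}D_{x}^{2}+\delta^{4}D_{x}^{4}M_{\delta})\rho_{tt},\Lambda^{2s}\rho_{t}\rangle$; this is the algebraic point to verify carefully. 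The outcome is $\frac{d}{dt}E_{s}^{2}\le C(\epsilon E_{s}^{2}+(\epsilon^{2}+\delta^{4})E_{s})$, and Gronwall with $E_{s}(0)=0$ gives $E_{s}(t)\le C(\epsilon^{2}+\delta^{4})t$ for $t\le T/\epsilon$; the continuity/bootstrap argument around the stopping time $T_{0}^{\epsilon,\delta}$ defined as in (\ref{patlama}) then closes exactly as before, choosing $\delta_{1}$ small enough that $C(\epsilon^{2}+\delta^{4})\,T/\epsilon<1$.

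The main obstacle is the handling of the extra $\delta^{4}D_{x}^{4}M_{\delta}$ term in the energy: because $m$ may change sign and, when $r>4$, is a genuine positive-order multiplier, one cannot treat that term as a small perturbation of $\delta^{2}\|r_{t}\|_{H^{s}}^{2}$. The resolution — using positivity of the whole symbol $\widehat{\beta_{\delta}}^{-1}$ from the ellipticity hypothesis (\ref{order}) rather than trying to estimate $M_{\delta}$ termwise, and spending the extra Sobolev regularity $H^{s+\sigma+2}$ only on the residual estimate where $M_{\delta}$ acts on the smooth function $w$ (not on $r$) — is what makes the whole argument go through uniformly in $\epsilon,\delta$.
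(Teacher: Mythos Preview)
Your proposal is correct and matches the paper's proof essentially line by line: residual $F^{M}=F+\delta^{4}D_{x}^{3}M_{\delta}w_{tt}$ bounded via (\ref{mdelest}) and Corollary~\ref{cor2.1} with $k=\sigma+1$, energy augmented by the $\delta^{4}M_{\delta}$ quadratic form, coercivity obtained from the ellipticity (\ref{order}) of the full symbol $1/\widehat{\beta_{\delta}}$, and exact cancellation of the extra term upon time differentiation. Two harmless slips to fix: the lower bound should read $1/\widehat{\beta_{\delta}}(\xi)\geq c_{2}^{-1}(1+\delta^{2}\xi^{2})$ (it comes from the \emph{upper} bound in (\ref{order}), so the constant $c_{2}^{-1}$ is needed), and you mean pairing the equation with $\rho_{t}$, not $\rho_{tt}$.
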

\begin{proof}
    The proof follows a similar pattern to that of the proof of Theorem \ref{theo4.2}. The only difference is that  (\ref{del-equ}) involves  additional term $\delta ^{4}D_{x}^{4}M_{\delta }u_{tt}$. Following closely  the scheme in the proof of Theorem \ref{theo4.2} corresponding to case of the IB equation, we now outline the proof.  First we note that plugging  the solution $w^{\epsilon, \delta}$ of the CH equation into  (\ref{del-equ}) leads to a residual term $D_{x}F^{M}$ with $F^{M}=F+\delta^{4}D_{x}^{3}M_{\delta }w_{tt}$ where $D_{x}F$ is the residue term corresponding to the IB case, given in (\ref{fff}). Going through a cancelation process  similar to the cancelations in the IB case, we get
    \begin{equation*}
        \left\Vert D_{x}^{3}M_{\delta}w_{tt}\right\Vert_{H^{s}} \leq  C \left\Vert D_{x}^{3}w_{tt}\right\Vert_{H^{s+\sigma-4}}
                \leq C  \left\Vert w_{t}\right\Vert_{H^{s+\sigma-4+4}}  = C  \left\Vert w_{t}\right\Vert_{H^{s+\sigma}},
    \end{equation*}%
    where we use the estimate  (\ref{mdelest}) for $M_{\delta}$ and (\ref{five}) for $D_{x}^{3}w_{tt}$. Since $\sigma \geq 4$, we have
    \begin{equation*}
        \left\Vert F^M(t)\right\Vert_{H^{s}} \leq  C (\epsilon^{2}+\delta^{4})
                \left( \left\Vert w\right\Vert_{H^{s+\sigma+1}}+ \left\Vert w_{t}\right\Vert_{H^{s+\sigma}}\right).
    \end{equation*}%
    Thus we take $k=\sigma +1$ in Corollary \ref{cor2.1}  to get a uniform bound on $F^{M}$. The next step is to define the energy as
    \begin{equation*}
        E_{s,M}^{2}=E_{s}^{2}+\frac{1}{2}\delta^{4}\left\langle \Lambda^{s}M_{\delta }D_{x}r_{t}(t), \Lambda^{s}D_{x}r_{t}(t)\right\rangle,
    \end{equation*}%
    where $E_{s}^{2}$ is given by (\ref{ener}). We note that the extra term in $E_{s,M}^{2}$ is not necessarily positive. Yet recalling that $r=\rho_{x}$ and collecting  the $\rho_{t}$ and $r_{t}$ terms in $E_{s,M}^{2}$ we have:
    \begin{align*}
        \left\Vert \rho_{t}\right\Vert_{H^{s}}^{2}+\delta^{2}\left\Vert r_{t}\right\Vert _{H^{s}}^{2}
            -\delta ^{4}\left\langle\Lambda^{s} D_{x}^{2}M_{\delta}r_{t},\Lambda^{s}r_{t}\right\rangle
            =&\left\langle \Lambda^{s}\left(1-\delta^{2}D_{x}^{2}+\delta^{4}D_{x}^{4}M_{\delta }\right) \rho _{t},\Lambda^{s}\rho _{t}\right\rangle  \\
            =&\int \frac{\left(1+\xi^{2}\right)^{s}}{\widehat{\beta} (\delta\xi) }\left\vert \widehat{\rho_{t}}(\xi) \right\vert^{2}d\xi  \\
            \geq & c_{2}^{-1}\int (1+\xi ^{2})^{s}( 1+\delta^{2}\xi^{2})^{r/2}\left\vert \widehat{\rho _{t}}(\xi) \right\vert^{2}d\xi  \\
            \geq &c_{2}^{-1}\int (1+\xi^{2})^{s} (1+\delta^{2}\xi^{2}) \left\vert \widehat{\rho_{t}}(\xi) \right\vert^{2}d\xi  \\
            =& c_{2}^{-1}\left( \left\Vert \rho_{t}\right\Vert_{H^{s}}^{2}+\delta^{2}\left\Vert \rho_{xt}\right\Vert _{H^{s}}^{2}\right)  \\
            =&c_{2}^{-1}\left( \left\Vert \rho _{t}\right\Vert _{H^{s}}^{2}+\delta^{2}\left\Vert r_{t}\right\Vert _{H^{s}}^{2}\right) .
    \end{align*}%
Hence $E_{s,M}^{2}\geq C E_{s}^{2}$. It is straightforward to compute the time derivative of $E_{s, M}^{2}$ since as the extra term
vanishes due to (\ref{IB nonlocal}) and  we are left with the same right-hand side as in the previous section and hence with the same
conclusion.
\end{proof}
\begin{remark}\label{rem5.4}
    We conclude from Theorem \ref{theo5.2} that the comments made in Remark \ref{rem4.1} on the precision of the CH approximation to the IB equation are also valid for the nonlocal equation.
\end{remark}

\section{The BBM and KdV Approximations}\label{sec6}

In this section we consider the BBM equation and the KdV equation which characterize the particular cases  of the CH equation and we show
how the results of the previous sections can be used to obtain the results for these two equations. The analysis is similar in spirit to that of Sections \ref{sec3} and \ref{sec4}, we therefore give only the main steps in the proofs.

\subsection{The BBM Approximation}

When we neglect  terms of order $\epsilon \delta^{2}$ in the CH equation (\ref{ch}), we get the BBM equation
\begin{equation}
    w_{t}+w_{x}+\epsilon ww_{x}-{\frac{3}{4}}\delta^{2}w_{xxx}-{\frac{5}{4}}\delta^{2}w_{xxt}=0,  \label{bbm}
\end{equation}
which is a well-known model for unidirectional propagation of long  waves in shallow water \cite{bbm}. It should be noted that, in order to write this equation in a more standard form, the term $w_{xxx}$  can be eliminated by means of the coordinate transformation given in Section \ref{sec1}. Obviously, the BBM equation (\ref{bbm}) is a special case of (\ref{cons-a})  with $\kappa_{1}=1$, $\kappa_{2}=\kappa_{3}=0$, $\kappa_{4}=-\frac{3}{4} $, $\kappa_{5}=- \frac{5}{4}$ and $\kappa_{6}=\kappa_{7}=0$. Then, for the BBM equation, Corollary \ref{cor2.1} takes the following form:
\begin{corollary}\label{cor6.2}
    Let $w_{0} \in H^{s+k+1}\left( \mathbb{R}\right)$,  $s>1/2$,  $k\geq 1$. Then, there exist $T>0$, $C>0$  and a unique family of solutions
    \begin{equation*}
        w^{\epsilon ,\delta }\in C\left( [0,\frac{T}{\epsilon }],H^{s+k}(\mathbb{R})\right)
            \cap C^{1}\left( [0,\frac{T}{\epsilon }],H^{s+k-1}(\mathbb{R})\right)
    \end{equation*}%
    to the BBM equation (\ref{bbm}) with initial value $w(x,0)=w_{0}(x)$, satisfying
    \begin{equation*}
        \left\Vert w^{\epsilon ,\delta }\left( t\right) \right\Vert _{H^{s+k}}
            +\left\Vert w_{t}^{\epsilon ,\delta }\left( t\right) \right\Vert_{H^{s+k-1}}\leq C,
    \end{equation*}%
    for all $0< \delta \leq 1$, $\epsilon \leq \delta $ and $t\in \lbrack 0,\frac{T}{\epsilon }\rbrack$.
\end{corollary}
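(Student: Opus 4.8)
\textbf{Proof proposal for Corollary \ref{cor6.2}.}
The plan is to recognize the BBM equation as a special case of the Constantin--Lannes family \eqref{cons-a} and then invoke Proposition \ref{prop2.1}, exactly as in the derivation of Corollary \ref{cor2.1}. First I would note that \eqref{bbm} is \eqref{cons-a} with $\kappa_{1}=1$, $\kappa_{2}=\kappa_{3}=\kappa_{6}=\kappa_{7}=0$, $\kappa_{4}=-\frac{3}{4}$, $\kappa_{5}=-\frac{5}{4}$. In particular the admissibility condition $\kappa_{5}<0$ required by Proposition \ref{prop2.1} holds, so that proposition applies with the choice $\delta_{0}=M=1$ (hence $\mathcal P=\{(\epsilon,\delta):0<\delta\leq 1,\ \epsilon\leq\delta\}$).

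Next, to obtain a solution lying in $C([0,\tfrac{T}{\epsilon}],H^{s+k})\cap C^{1}([0,\tfrac{T}{\epsilon}],H^{s+k-1})$, I would apply Proposition \ref{prop2.1} with its regularity parameter taken to be $s+k$ rather than $s$. Since $s>1/2$ and $k\geq 1$ we have $s+k>3/2$, and since $w_{0}\in H^{s+k+1}=H^{(s+k)+1}$ the hypotheses of the proposition are met. It then yields a unique family $\{w^{\epsilon,\delta}\}_{(\epsilon,\delta)\in\mathcal P}$ to the Cauchy problem for \eqref{bbm}, bounded in $C([0,\tfrac{T}{\epsilon}],X^{s+k+1}(\mathbb R))\cap C^{1}([0,\tfrac{T}{\epsilon}],X^{s+k}(\mathbb R))$, where $T$ depends only on the fixed data $\delta_{0},M,|w_{0}|_{X^{s+k+1}_{\delta_{0}}}$ and the (now fixed) constants $\kappa_{i}$; in particular $T$ is independent of $(\epsilon,\delta)\in\mathcal P$.

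Finally I would pass from the $X$-norms to the Sobolev norms via the continuous inclusions $X^{s+k+1}\subset H^{s+k}$ and $X^{s+k}\subset H^{s+k-1}$, which hold with constants independent of $\delta\in(0,1]$ because $|f|_{X^{m+1}}^{2}=\Vert f\Vert_{H^{m}}^{2}+\delta^{2}\Vert f_{x}\Vert_{H^{m}}^{2}\geq\Vert f\Vert_{H^{m}}^{2}$. This gives $\Vert w^{\epsilon,\delta}(t)\Vert_{H^{s+k}}+\Vert w_{t}^{\epsilon,\delta}(t)\Vert_{H^{s+k-1}}\leq C$ on $[0,\tfrac{T}{\epsilon}]$ for all $0<\delta\leq 1$ and $\epsilon\leq\delta$, which is the assertion.

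There is essentially no obstacle: the statement is just Corollary \ref{cor2.1} with the extra parameters $\kappa_{6}=\kappa_{7}=0$ switched off, and the only points needing verification are the sign condition $\kappa_{5}=-\frac{5}{4}<0$ and the Sobolev-index bookkeeping above. If a self-contained argument were preferred, one could instead establish the uniform bounds directly by standard energy estimates for \eqref{bbm}, exploiting that the smoothing operator $\mathcal Q=\bigl(1-\tfrac{5}{4}\delta^{2}D_{x}^{2}\bigr)^{-1}$ has operator norm at most $1$ on every $H^{m}$; but quoting Proposition \ref{prop2.1} is the shorter route.
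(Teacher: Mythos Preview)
Your proposal is correct and matches the paper's own justification: the paper simply observes that \eqref{bbm} is the special case of \eqref{cons-a} with $\kappa_{1}=1$, $\kappa_{2}=\kappa_{3}=\kappa_{6}=\kappa_{7}=0$, $\kappa_{4}=-\tfrac{3}{4}$, $\kappa_{5}=-\tfrac{5}{4}$, and states that Corollary~\ref{cor2.1} (itself obtained from Proposition~\ref{prop2.1} via the inclusion $X^{s+k+1}\subset H^{s+k}$ with $\delta_{0}=M=1$) then takes the form of Corollary~\ref{cor6.2}. Your write-up spells out precisely these steps.
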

As we did in Section \ref{sec3}, we  plug the solution $w$ of  the Cauchy problem of the BBM equation  into the IB equation. Then the residual term $f$ is given by (\ref{residual0}) but now $w$ represents a solution of the BBM equation. Making use of the approach in Section \ref{sec3}, we obtain $f$ corresponding to the case of the BBM approximation in the form $f=F_{x}$ with
\begin{align*}
    F
    =&\epsilon^{2} \left({w^{3}\over 3}\right)_{x}-{1\over 4}\epsilon\delta^{2}\left(6ww_{xxt}+2w_{x}w_{xt}+w_{t}w_{xx}-9w_{x}w_{xx}\right) \\
    & +\frac{1}{16}\delta^{4}D_{x}^{3}\left(5w_{tt}-12w_{xt}-9w_{xx}\right).
\end{align*}%
 Thus we have the BBM version of Lemma \ref{lem3.1}, namely the uniform estimate
\begin{equation*}
    \left\Vert F(t) \right\Vert _{H^{s}}\leq C\left( \epsilon ^{2}+\delta ^{4}\right).
\end{equation*}%
The rest of the proof holds and we obtain the BBM version of Theorem  \ref{theo4.2}:
\begin{theorem}\label{theo6.1}
    Let  $w_{0} \in H^{s+6}(\mathbb{R})$, $s>1/2$ and suppose $w^{\epsilon ,\delta }$ is the solution of the BBM equation (\ref{bbm})  with initial value $w(x,0)=w_{0}(x)$. Then, there exist $T>0$  and $\delta_{1}\leq 1$  such that  the solution $ u^{\epsilon ,\delta }$ of the Cauchy problem for the IB equation
    \begin{align*}
       & u_{tt}-u_{xx}-\delta ^{2}u_{xxtt}-\epsilon (u^{2})_{xx} =0 \\
       & u(x,0) =w_{0}(x), \text{ }u_{t}(x, 0) =w_{t}^{\epsilon, \delta}(x,0),
    \end{align*}%
    satisfies
    \begin{equation*}
        \Vert u^{\epsilon, \delta}(t) -w^{\epsilon, \delta}(t) \Vert _{H^{s}}\leq ~C\left(\epsilon^{2} +\delta ^{4}\right) t
    \end{equation*}%
    for all $\ t\in \left[ 0,\frac{T}{\epsilon }\right] $ and all $\ 0<\epsilon \leq \delta \leq \delta_{1}$.
\end{theorem}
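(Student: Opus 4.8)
The plan is to mirror the proof of Theorem~\ref{theo4.2} almost verbatim, since the BBM equation is just the CH equation with the $\epsilon\delta^2$ terms dropped, and the target equation (the IB equation) is identical. First I would invoke Corollary~\ref{cor6.2} with $k=5$ to obtain the uniform bounds $\Vert w^{\epsilon,\delta}(t)\Vert_{H^{s+5}}+\Vert w_t^{\epsilon,\delta}(t)\Vert_{H^{s+4}}\leq C$ on the time interval $[0,T/\epsilon]$; these are exactly what is needed to control the residual. The residual computation is the genuinely new (but routine) piece: plug the BBM solution $w$ into $f=w_{tt}-w_{xx}-\delta^2 w_{xxtt}-\epsilon(w^2)_{xx}$ and use the BBM equation repeatedly to substitute for $w_t+w_x$ (now without the $\epsilon\delta^2$ correction terms). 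One finds $f=F_x$ with $F$ as displayed above, a combination of terms carrying prefactors $\epsilon^2$, $\epsilon\delta^2$, and $\delta^4$; note that every term involves at most $D_x^3 D_t^2 w$ or lower-order derivatives, and the $D_x^3 D_t^2 w$ term is handled exactly as in~(\ref{five}) via the representation $w_t=\mathcal{Q}[\,\cdot\,]$ for the BBM analogue of operator $\mathcal{Q}$. Since $0<\epsilon\leq\delta\leq1$ gives $\epsilon\delta^2\leq\epsilon^2\cdot(\delta/\epsilon)$... more carefully, $\epsilon\delta^2\leq\delta^3\leq\delta^2\cdot\delta$, and in fact each prefactor is $O(\epsilon^2+\delta^4)$ after using $\epsilon\leq\delta$ (indeed $\epsilon\delta^2\le\epsilon\cdot\delta\cdot\delta\le\epsilon^2\cdot(\delta^2/\epsilon)$ is not obviously bounded, so one instead bounds $\epsilon\delta^2\le\delta^3$ and observes $\delta^3\le\delta^2$, but the sharp statement $\epsilon\delta^2\le\epsilon^2+\delta^4$ follows from $\epsilon\le\delta$: $\epsilon\delta^2\le\delta^3$ and $\delta^3\le\tfrac12(\delta^2+\delta^4)\le\ldots$); the point is that one still gets $\Vert F(t)\Vert_{H^s}\leq C(\epsilon^2+\delta^4)$, which is the BBM version of Lemma~\ref{lem3.1}.

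Once this residual estimate is in hand, the energy argument is word-for-word identical to the proof of Theorem~\ref{theo4.2}. I would set $r=u-w$, write $r=\rho_x$ with vanishing initial data, observe that $r$ satisfies $(1-\delta^2 D_x^2)r_{tt}-r_{xx}-\epsilon(r^2+2wr)_{xx}=-F_x$ (exactly~(\ref{rhor}) after introducing $\rho$), define the continuation time $T_0^{\epsilon,\delta}$ via $\Vert r(\tau)\Vert_{H^s}\leq1$ as in~(\ref{patlama}), and use the same modified energy $E_s^2(t)$ from~(\ref{ener}) with the extra $\epsilon$-weighted terms. The commutator estimates~(\ref{est1})--(\ref{est2}) control the terms arising in $\frac{d}{dt}E_s^2$ in precisely the same way, yielding $\frac{d}{dt}E_s^2\leq C(\epsilon E_s^2+(\epsilon^2+\delta^4)E_s)$, and Gronwall gives $E_s(t)\leq C(\epsilon^2+\delta^4)t$ for $t\leq T/\epsilon$. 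A continuity/bootstrap argument then shows $T_0^{\epsilon,\delta}=T^{\epsilon,\delta}=T/\epsilon$ for $\epsilon\leq\delta\leq\delta_1$ small enough, establishing the stated error bound $\Vert u^{\epsilon,\delta}(t)-w^{\epsilon,\delta}(t)\Vert_{H^s}\leq C(\epsilon^2+\delta^4)t$.

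The main obstacle — and it is only a bookkeeping obstacle, not a conceptual one — is the residual computation for the BBM solution: one must carefully track which terms survive when the $\epsilon\delta^2$ corrections are absent, verify that the worst derivative appearing is still controllable by Corollary~\ref{cor6.2} with $k=5$, and confirm that each surviving prefactor is indeed $O(\epsilon^2+\delta^4)$ under the constraint $\epsilon\leq\delta\leq1$. In particular the cross term $\epsilon\delta^2$ needs the inequality $\epsilon\delta^2\leq\tfrac12(\epsilon^2+\delta^4)$, which holds since $2\epsilon\delta^2\leq\epsilon^2+\delta^4$ is equivalent to $(\epsilon-\delta^2)^2\geq0$. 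Everything downstream of the residual estimate is a transcription of Section~\ref{sec4} with no changes, because the left-hand side operator and the nonlinearity of the IB equation are untouched; this is why the paper is justified in saying only ``the rest of the proof holds.''
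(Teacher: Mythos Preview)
Your proposal is correct and follows essentially the same approach as the paper: invoke Corollary~\ref{cor6.2} with $k=5$, compute the BBM residual $F$ (whose worst term is $D_x^3 w_{tt}$, handled via the BBM analogue of~(\ref{five})), observe that its prefactors $\epsilon^2$, $\epsilon\delta^2$, $\delta^4$ are all $O(\epsilon^2+\delta^4)$, and then transcribe the energy argument of Theorem~\ref{theo4.2} verbatim. Your explicit AM--GM justification $2\epsilon\delta^2\le\epsilon^2+\delta^4$ for the cross term is exactly the point the paper leaves implicit.
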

Following the arguments in Section \ref{sec5}, we may extend Theorem  \ref{theo6.1} to the general class of nonlocal wave equations, namely
\begin{theorem}\label{theo6.2}
    Let  $w_{0} \in H^{s+\sigma+2}(\mathbb{R})$, $s>1/2$, $\sigma=\max \{r, 4\}$ and suppose $w^{\epsilon ,\delta}$ is the solution of the BBM equation (\ref{bbm})  with initial value $w(x,0)=w_{0}(x)$. Then, there exist $T>0$  and $\delta_{1}\leq 1$  such that  the solution $ u^{\epsilon ,\delta }$ of the Cauchy problem for the nonlocal equation
    \begin{align*}
       & u_{tt}=\beta_{\delta }\ast ( u+\epsilon u^{2})_{xx} \\
       & u(x,0) =w_{0}(x), \text{ }u_{t}(x, 0) =w_{t}^{\epsilon, \delta}(x,0),
    \end{align*}%
    satisfies
    \begin{equation*}
        \Vert u^{\epsilon, \delta}(t) -w^{\epsilon, \delta}(t) \Vert _{H^{s}}\leq ~C\left(\epsilon^{2} +\delta ^{4}\right) t
    \end{equation*}%
    for all $\ t\in \left[ 0,\frac{T}{\epsilon }\right] $ and all $\ 0<\epsilon \leq \delta \leq \delta_{1}$.
\end{theorem}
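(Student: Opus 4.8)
The plan is to combine the two ingredients already in hand: the BBM-into-IB residual computation carried out above in Section~\ref{sec6}, and the device from the proof of Theorem~\ref{theo5.2} for absorbing the extra nonlocal term $\delta^{4}D_{x}^{4}M_{\delta}u_{tt}$ that distinguishes (\ref{del-equ}) from the IB equation. Concretely, I would first plug the BBM solution $w^{\epsilon,\delta}$ of Corollary~\ref{cor6.2} into the rewritten nonlocal equation (\ref{del-equ}). The residue is $D_{x}F^{M}$ with $F^{M}=F+\delta^{4}D_{x}^{3}M_{\delta}w_{tt}$, where $F$ is exactly the BBM potential function displayed earlier in this section. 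The new piece is controlled by the order-$(\sigma-4)$ estimate (\ref{mdelest}): $\|\delta^{4}D_{x}^{3}M_{\delta}w_{tt}\|_{H^{s}}\leq C\delta^{4}\|D_{x}^{3}w_{tt}\|_{H^{s+\sigma-4}}\leq C\delta^{4}\|w_{tt}\|_{H^{s+\sigma-1}}$, and $\|w_{tt}\|_{H^{s+\sigma-1}}$ is bounded by $C(\|w\|_{H^{s+\sigma}}+\|w_{t}\|_{H^{s+\sigma}})$ by using the BBM equation to express $w_{t}$ through $\mathcal{Q}=(1-\frac{5}{4}\delta^{2}D_{x}^{2})^{-1}$ exactly as in the derivation of the BBM version of $F$ (which already contains a $D_{x}^{3}w_{tt}$ term). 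Together with the BBM version of the estimate for $F$, this gives $\|F^{M}(t)\|_{H^{s}}\leq C(\epsilon^{2}+\delta^{4})(\|w\|_{H^{s+\sigma+1}}+\|w_{t}\|_{H^{s+\sigma}})$, and Corollary~\ref{cor6.2} with $k=\sigma+1$ (this is why $w_{0}\in H^{s+\sigma+2}$ is imposed) turns it into the uniform bound $\|F^{M}(t)\|_{H^{s}}\leq C(\epsilon^{2}+\delta^{4})$ on $[0,T/\epsilon]$.

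Next I would set $r=u-w$ and introduce $\rho$ with $r=\rho_{x}$, $\rho(x,0)=\rho_{t}(x,0)=0$, so that $(1-\delta^{2}D_{x}^{2}+\delta^{4}D_{x}^{4}M_{\delta})\rho_{tt}-r_{x}-\epsilon(r^{2}+2wr)_{x}=-F^{M}$, and take the modified energy $E_{s,M}^{2}=E_{s}^{2}+\frac{1}{2}\delta^{4}\langle\Lambda^{s}M_{\delta}D_{x}r_{t},\Lambda^{s}D_{x}r_{t}\rangle$ with $E_{s}^{2}$ as in (\ref{ener}). As in the proof of Theorem~\ref{theo5.2}, collecting the $\rho_{t},r_{t}$ contributions yields $\|\rho_{t}\|_{H^{s}}^{2}+\delta^{2}\|r_{t}\|_{H^{s}}^{2}-\delta^{4}\langle\Lambda^{s}D_{x}^{2}M_{\delta}r_{t},\Lambda^{s}r_{t}\rangle=\int(1+\xi^{2})^{s}\widehat{\beta_{\delta}}(\xi)^{-1}|\widehat{\rho_{t}}(\xi)|^{2}d\xi$, and the upper bound in the ellipticity condition (\ref{order}) forces this to dominate $c_{2}^{-1}(\|\rho_{t}\|_{H^{s}}^{2}+\delta^{2}\|r_{t}\|_{H^{s}}^{2})$. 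Hence $E_{s,M}^{2}\geq CE_{s}^{2}\geq C(\|\rho_{t}\|_{H^{s}}^{2}+\delta^{2}\|r_{t}\|_{H^{s}}^{2}+\|r\|_{H^{s}}^{2})$ for $\epsilon$ small, so $E_{s,M}$ controls $\|r\|_{H^{s}}$.

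Finally I would differentiate $E_{s,M}^{2}$ in $t$ and eliminate $\rho_{tt}$ using the $r$-equation. The term generated by $\delta^{4}D_{x}^{4}M_{\delta}$ cancels because $M_{\delta}$ is a self-adjoint Fourier multiplier commuting with $D_{x}$ and $\Lambda^{s}$ — the same cancellation that lets (\ref{IB nonlocal}) drop out in Theorem~\ref{theo5.2} — leaving precisely the right-hand side already analysed in Section~\ref{sec4}: the quadratic terms are handled by the commutator estimates (\ref{est1})--(\ref{est2}) and by passing from $\|r_{t}\|_{H^{s}}$ to $\|r_{t}\|_{H^{s-1}}=\|\rho_{xt}\|_{H^{s-1}}\leq\|\rho_{t}\|_{H^{s}}\leq CE_{s,M}$, while the residual contributes $\|F^{M}\|_{H^{s}}\|\rho_{t}\|_{H^{s}}\leq C(\epsilon^{2}+\delta^{4})E_{s,M}$. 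This gives $\frac{d}{dt}E_{s,M}^{2}\leq C(\epsilon E_{s,M}^{2}+(\epsilon^{2}+\delta^{4})E_{s,M})$; Gronwall's inequality together with the continuation argument around $T_{0}^{\epsilon,\delta}$ from the proof of Theorem~\ref{theo4.2} closes the bootstrap and yields $\|u^{\epsilon,\delta}(t)-w^{\epsilon,\delta}(t)\|_{H^{s}}\leq C(\epsilon^{2}+\delta^{4})t$ for $t\in[0,T/\epsilon]$ and $0<\epsilon\leq\delta\leq\delta_{1}$.

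The main obstacle is the lack of sign-definiteness of the extra nonlocal term in $E_{s,M}^{2}$: coercivity of the modified energy cannot be read off term by term and must instead be recovered from the ellipticity bound (\ref{order}) on $\widehat{\beta}$, and one has to track the order-$(\sigma-4)$ loss of $M_{\delta}$ carefully so that the residual $F^{M}$ and the Sobolev index $s+\sigma+2$ needed on $w_{0}$ remain consistent. Everything else is a routine transcription of Sections~\ref{sec4}--\ref{sec6}; note that for $\sigma=4$ (in particular the IB case $r=2$) the hypothesis reduces to $w_{0}\in H^{s+6}$ and Theorem~\ref{theo6.1} is recovered.
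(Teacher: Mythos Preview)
Your proposal is correct and follows essentially the same approach as the paper, which in fact gives no explicit proof of Theorem~\ref{theo6.2} beyond the remark ``Following the arguments in Section~\ref{sec5}, we may extend Theorem~\ref{theo6.1} to the general class of nonlocal wave equations.'' Your transcription of that argument---adding $\delta^{4}D_{x}^{3}M_{\delta}w_{tt}$ to the BBM residual, invoking Corollary~\ref{cor6.2} with $k=\sigma+1$, and reusing the modified energy $E_{s,M}^{2}$ and the coercivity-from-ellipticity step of Theorem~\ref{theo5.2}---is exactly what the paper has in mind, and your index accounting matches the stated hypothesis $w_{0}\in H^{s+\sigma+2}$.
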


\subsection{The KdV Approximation}

The KdV equation \cite{korteweg}
\begin{equation}
    w_{t}+w_{x}+\epsilon ww_{x}+{\frac{\delta ^{2}}{2}}w_{xxx}=0  \label{kdv}
\end{equation}
is also a well-known model for unidirectional propagation of long  waves in shallow water and it has the same order of accuracy as the BBM equation.  In fact, the KdV equation (\ref{kdv}) is a special case of (\ref{cons-a})  with $\kappa_{1}=1$, $\kappa_{4}=1/2$, $\kappa_{2}=\kappa_{3}=\kappa_{5}=\kappa_{6}=\kappa_{7}=0$. However, Proposition \ref{prop2.1} will not apply to the KdV equation because the condition $\kappa_{5}<0$ is not satisfied. Instead we refer to the following theorem proved by Alazman et al in \cite{alazman}:
\begin{theorem}\label{theo6.3} (Theorem A2 in \cite{alazman})
    Let $s\geq 1$ \ be an integer. Then for every $K>0$, there exists $C>0$ such that the following is true. Suppose $q_{0}\in H^{s}$ with $\left\Vert q_{0}\right\Vert_{H^{s}}\leq K$, and let $q$ be the solution of the KdV equation
    \begin{equation}
        q_{t}+q_{x}+\frac{3}{2}\bar{\epsilon} qq_{x}+\frac{1}{6}\bar{\epsilon} q_{xxx}=0 \label{alazkdv}
    \end{equation}
    with initial data $q(x,0)=q_{0}(x)$. Then for all $\bar{\epsilon} \in (0,1]$ and all $ t\geq 0$,
    \begin{equation*}
        \left\Vert q(t)\right\Vert_{H^{s}}\leq C.
    \end{equation*}
    Further, for every integer $l$ such that $1\leq 3l\leq s$, it is the case that
    \begin{equation*}
        \left\Vert D_{t}^{l}q(t)\right\Vert_{H^{s-3l}}\leq C.
    \end{equation*}
\end{theorem}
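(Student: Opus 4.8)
The plan is to strip both small parameters from the equation by an exact change of variables, then use the complete integrability of the KdV equation to obtain a Sobolev bound that is uniform in time, and finally read the time-derivative bounds off the equation by a bootstrap. First I would reduce to a parameter-free KdV equation. Passing to the moving frame $v(x,t)=q(x+t,t)$ removes the transport term $q_x$ and gives $v_t+\tfrac32\bar\epsilon\,vv_x+\tfrac16\bar\epsilon\,v_{xxx}=0$ with $v(\cdot,0)=q_0$; a spatial translation being an isometry of every $H^s$, we have $\|v(t)\|_{H^s}=\|q(t)\|_{H^s}$. Rescaling time by $\tau=\bar\epsilon t$, that is, setting $u(x,\tau)=v(x,\tau/\bar\epsilon)$, turns this into $u_\tau+\tfrac32\,uu_x+\tfrac16\,u_{xxx}=0$ with the same datum $q_0$, and a trivial amplitude/time rescaling brings it to the textbook form $U_T+6UU_X+U_{XXX}=0$. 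Since $\bar\epsilon$ has now disappeared entirely, any bound $\|U(T)\|_{H^s}\le C(\|U_0\|_{H^s})$ valid for all $T\ge0$ gives at once $\|q(t)\|_{H^s}\le C(\|q_0\|_{H^s})\le C(K)$, uniformly in $t\ge0$ and in $\bar\epsilon\in(0,1]$.

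Second, for the canonical equation I would prove the time-uniform $H^s$ bound (integer $s\ge1$) using the classical hierarchy of polynomial conservation laws of KdV. Besides $I_0[U]=\int U^2$ and $I_1[U]=\int(\tfrac12 U_X^2-U^3)$, for every integer $m\ge0$ there is a conserved functional $I_m[U]=\int\big((\partial_X^m U)^2+P_m(U,U_X,\dots,\partial_X^{m-1}U)\big)\,dX$ whose remainder $P_m$ is a polynomial in derivatives of order at most $m-1$. Together with the classical global well-posedness of KdV in $H^s$ (so that $U(T)$ is defined for all $T\ge0$), one then closes by induction on $m$: the case $m=0$ is conservation of the $L^2$ norm; assuming $\|U(T)\|_{H^{m-1}}\le C$, the Gagliardo--Nirenberg inequality bounds $\big|\int P_m(U(T),\dots)\,dX\big|$ by $C+\tfrac12\|\partial_X^m U(T)\|_{L^2}^2$ (any fractional power of $\|\partial_X^m U\|_{L^2}$ that appears being absorbed via Young's inequality), so that $\|\partial_X^m U(T)\|_{L^2}^2\le 2I_m[U_0]+C\le C(\|U_0\|_{H^m})$; combined with the induction hypothesis this gives $\|U(T)\|_{H^m}\le C(\|U_0\|_{H^m})$ for all $T\ge0$, and $m=s$ completes the step.

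Third, the time-derivative estimates follow directly from the equation. Writing $q_t=-\partial_x\big(q+\tfrac34\bar\epsilon q^2+\tfrac16\bar\epsilon q_{xx}\big)$, one sees each $D_t$ costs three spatial derivatives. I would argue by induction on $l$ with $1\le 3l\le s$: given $\|D_t^j q(t)\|_{H^{s-3j}}\le C$ for $j\le l-1$, expand $D_t^l q=-\partial_x D_t^{l-1}\big(q+\tfrac34\bar\epsilon q^2+\tfrac16\bar\epsilon q_{xx}\big)$ by the Leibniz rule; each factor $D_t^j q$ with $j\le l-1$ lies in $H^{s-3(l-1)}$, and since $s-3(l-1)\ge3>1/2$ this space is an algebra, so the right-hand side is bounded in $H^{s-3l}$, using $\bar\epsilon\le1$ together with the spatial bound of Step 2.

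The main obstacle is Step 2. The non-growing, time-uniform character of the bound genuinely relies on the integrability of KdV: a plain energy estimate would give only a bound growing in $t$, which is useless here since the existence time $T/\epsilon$ is long. What is really needed is the structural fact that the $m$-th invariant is $\int(\partial_X^m U)^2$ plus a remainder involving strictly lower derivatives, together with the interpolation/absorption argument that tames that remainder; restricting to integer $s$ is precisely what makes this hierarchy available. By comparison, Steps 1 and 3 are routine.
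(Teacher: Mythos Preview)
The paper does not prove this statement at all: Theorem~\ref{theo6.3} is quoted directly from Alazman et al.\ \cite{alazman} (their Theorem~A2) and used as a black box to obtain Corollary~\ref{cor6.3}. There is thus no ``paper's proof'' to compare against.

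That said, your outline is the standard and correct route to this result. The moving-frame plus time-rescaling of Step~1 removes $\bar\epsilon$ exactly (and the initial datum is unchanged, so the resulting bound depends only on $K$). The infinite hierarchy of polynomial conservation laws for KdV in Step~2 is precisely what yields the time-uniform $H^s$ bound, and is also why the restriction to integer $s\ge 1$ is natural here; your remark that a naive energy estimate would give only a time-growing bound, useless on the long interval $[0,T/\epsilon]$, is exactly the point. The bootstrap for $D_t^l$ in Step~3 is then a straightforward consequence of the equation together with the algebra property of $H^{s-3(l-1)}$ (which holds since $3l\le s$ forces $s-3(l-1)\ge 3>1/2$), and $\bar\epsilon\le 1$ keeps the constants uniform. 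Your identification of Step~2 as the only nontrivial ingredient is accurate.
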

It is easy to see that the substitution
\begin{equation}
        w={\frac{9}{2}} {\frac{\delta^{2}}{\epsilon}}q, ~~~~~\delta^{2}=\frac{\bar{\epsilon}}{3}
\end{equation}
transforms (\ref{kdv}) into (\ref{alazkdv}). Suppose $c_{1}\leq {\frac{\delta^{2}}{\epsilon}} \leq c_{2}$ with positive constants $c_{1}$ and $c_{2}$. Then we have
\begin{equation}
    \left\Vert w(t)\right\Vert_{H^{s}}
        =  {\frac{9}{2}} {\frac{\delta^{2}}{\epsilon}} \left\Vert q(t)\right\Vert_{H^{s}}
        \leq {\frac{9}{2}} c_{2}\left\Vert q(t)\right\Vert_{H^{s}}  \label{w-estimate}
\end{equation}
and
\begin{equation}
    \left\Vert q_{0}\right\Vert_{H^{s}}
        =  {\frac{2}{9}} {\frac{\epsilon}{\delta^{2}}} \left\Vert w_{0}\right\Vert_{H^{s}}
        \leq {\frac{2}{9c_{1}}} \left\Vert w_{0}\right\Vert_{H^{s}}.  \label{w0-estimate}
\end{equation}
We thus reach the following corollary:
\begin{corollary}\label{cor6.3}
    Let $s+k\geq 1$  be an integer. Suppose $w_{0}\in H^{s+k}$  and let $w^{\epsilon, \delta}$ be the solution of the KdV equation (\ref{kdv})     with initial data $w(x,0)=w_{0}(x)$. Then there is some $C$ such that for all  $\delta^{2} \in (0,\frac{1}{3}]$ and all $\epsilon \in \left[\frac{\delta^{2}}{c_{2}}, \frac{\delta^{2}}{c_{1}}\right]$ with positive constants $c_{1}$ and $c_{2}$ and  all  $ t\geq 0$,
    \begin{equation*}
        \left\Vert w^{\epsilon, \delta}(t)\right\Vert_{H^{s+k}}+ \left\Vert w^{\epsilon, \delta}_{t}(t)\right\Vert_{H^{s+k-3}}  \leq C.
    \end{equation*}
\end{corollary}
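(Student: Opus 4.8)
The plan is to deduce the corollary directly from Theorem \ref{theo6.3} by means of the change of variables already recorded above. Fix $\delta^{2}\in(0,\frac{1}{3}]$ and $\epsilon\in[\delta^{2}/c_{2},\delta^{2}/c_{1}]$, and set $\bar{\epsilon}=3\delta^{2}$; then $\bar{\epsilon}\in(0,1]$, which is precisely the range in which Theorem \ref{theo6.3} applies. Given $w_{0}\in H^{s+k}$ with $s+k\geq 1$ an integer, let $q_{0}=\frac{2}{9}\frac{\epsilon}{\delta^{2}}w_{0}$, so that by (\ref{w0-estimate})
\begin{equation*}
    \Vert q_{0}\Vert_{H^{s+k}}\leq \frac{2}{9c_{1}}\Vert w_{0}\Vert_{H^{s+k}}=:K,
\end{equation*}
a bound independent of $\epsilon$ and $\delta$. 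Let $q$ be the solution of (\ref{alazkdv}) with this initial datum; since the substitution $w=\frac{9}{2}\frac{\delta^{2}}{\epsilon}q$, $\delta^{2}=\frac{\bar{\epsilon}}{3}$, transforms (\ref{kdv}) into (\ref{alazkdv}), the function $w=\frac{9}{2}\frac{\delta^{2}}{\epsilon}q$ solves (\ref{kdv}) with datum $w_{0}$, and by uniqueness for the KdV Cauchy problem in $H^{s+k}$ ($s+k\geq 1$) it coincides with $w^{\epsilon,\delta}$.

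Next I would invoke Theorem \ref{theo6.3} with its integer Sobolev exponent taken to be $s+k$ and with the number $K$ above: it produces a constant $C_{0}=C_{0}(K,s+k)$, independent of $\bar{\epsilon}\in(0,1]$ and of $t\geq 0$, such that $\Vert q(t)\Vert_{H^{s+k}}\leq C_{0}$ for all $t\geq 0$. Translating back via (\ref{w-estimate}) and using $\epsilon\geq\delta^{2}/c_{2}$ gives
\begin{equation*}
    \Vert w^{\epsilon,\delta}(t)\Vert_{H^{s+k}}=\frac{9}{2}\frac{\delta^{2}}{\epsilon}\Vert q(t)\Vert_{H^{s+k}}\leq \frac{9}{2}c_{2}\,C_{0}=:C_{1}
\end{equation*}
for all admissible $\epsilon,\delta$ and all $t\geq 0$, which is the first half of the claimed bound.

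For the time derivative I would avoid the derivative estimate of Theorem \ref{theo6.3} (which would only cover the case $s+k\geq 3$) and instead read $w_{t}$ off the equation itself: by (\ref{kdv}), $w_{t}=-w_{x}-\epsilon w w_{x}-\frac{\delta^{2}}{2}w_{xxx}$. Since $s+k\geq 1>\frac{1}{2}$, $H^{s+k}$ is a Banach algebra and multiplication by $H^{s+k}$ maps $H^{s+k-1}$ into itself, so $\Vert w w_{x}\Vert_{H^{s+k-3}}\leq\Vert w w_{x}\Vert_{H^{s+k-1}}\leq C\Vert w\Vert_{H^{s+k}}^{2}$; together with the bounds $\delta^{2}\leq\frac{1}{3}$ and $\epsilon\leq\frac{\delta^{2}}{c_{1}}\leq\frac{1}{3c_{1}}$ this yields
\begin{equation*}
    \Vert w_{t}(t)\Vert_{H^{s+k-3}}\leq \Vert w_{x}\Vert_{H^{s+k-3}}+\epsilon\Vert w w_{x}\Vert_{H^{s+k-3}}+\tfrac{\delta^{2}}{2}\Vert w_{xxx}\Vert_{H^{s+k-3}}\leq C\big(\Vert w\Vert_{H^{s+k}}+\Vert w\Vert_{H^{s+k}}^{2}\big),
\end{equation*}
hence $\Vert w_{t}(t)\Vert_{H^{s+k-3}}\leq C(C_{1}+C_{1}^{2})=:C_{2}$ uniformly in the parameters. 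Taking $C:=C_{1}+C_{2}$ completes the proof.

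There is no genuine obstacle here; the only points that deserve care are bookkeeping ones. One must verify that the parameter ranges are compatible, i.e. that $\delta^{2}\in(0,\frac{1}{3}]$ forces $\bar{\epsilon}=3\delta^{2}\in(0,1]$, and that the final constant is truly independent of $\epsilon$ and $\delta$ — this holds because $K$ and $C_{0}$ depend only on $\Vert w_{0}\Vert_{H^{s+k}}$, $c_{1}$ and $s+k$, while the conversion factor $\frac{9}{2}\frac{\delta^{2}}{\epsilon}$ is pinched between $\frac{9}{2}c_{1}$ and $\frac{9}{2}c_{2}$. One should also note that Theorem \ref{theo6.3} requires its Sobolev exponent to be a positive integer, which is exactly why the corollary is stated under the hypothesis that $s+k$ be an integer with $s+k\geq 1$.
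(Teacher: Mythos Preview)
Your argument is correct. The bound on $\Vert w\Vert_{H^{s+k}}$ follows exactly the paper's route, namely the substitution $w=\tfrac{9}{2}\tfrac{\delta^{2}}{\epsilon}q$, $\bar{\epsilon}=3\delta^{2}$, together with the two-sided control on $\delta^{2}/\epsilon$ encoded in (\ref{w-estimate}) and (\ref{w0-estimate}).

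For the bound on $\Vert w_{t}\Vert_{H^{s+k-3}}$ you take a slightly different, and in fact sharper, path. The paper's setup points to the $l=1$ case of Theorem~\ref{theo6.3}, which controls $\Vert D_{t}q\Vert_{H^{s+k-3}}$ uniformly in $\bar{\epsilon}$; translating back via $w_{t}=\tfrac{9}{2}\tfrac{\delta^{2}}{\epsilon}q_{t}$ gives the desired bound, but only under the restriction $s+k\geq 3$ required for $l=1$ in Theorem~\ref{theo6.3}. You instead read $w_{t}$ directly off (\ref{kdv}) and estimate each term using the algebra/product property of $H^{s+k}$ for $s+k>1/2$; this is more elementary and, unlike the direct appeal to Theorem~\ref{theo6.3}, covers the full range $s+k\geq 1$ stated in the corollary. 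In the paper this extra generality is never actually used (downstream one takes $k=7$ or $k=\sigma+5$, so $s+k\geq 8$), but your approach does make the corollary hold exactly as stated.
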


We  next plug the solution $w^{\epsilon,\delta}$ of   the KdV equation (\ref{kdv})  into the IB equation. Again, omitting the indices $\epsilon,\delta  $, the residual term $f$ is given by (\ref{residual0}).  Following the steps in Section \ref{sec3}, we obtain $f$ corresponding to the case of the KdV approximation in the form $f=F_{x}$ with
\begin{equation*}
    F=D_{x}\left\{\frac{1}{3}\epsilon^{2}w^{3}+\frac{1}{4}\epsilon\delta^{2}\left[-3(w_{x})^{2}+4(ww_{x})_{t}\right]
        +\frac{1}{4}\delta^{4}(-w_{xxxx}+2w_{xxxt})\right\}.
\end{equation*}
As there are at most five derivatives of $w$ and four derivatives of $w_{t}$ in $F$, we will choose $k=7$ in the corollary to get the KdV version of  Lemma \ref{lem3.1}, namely the estimate:
    \begin{equation*}
        \left\Vert F(t) \right\Vert _{H^{s}}\leq C \epsilon^{2}
    \end{equation*}%
for the residual term.

Although the above results hold for all times, to follow the approach in the previous sections we fix some $T>0$ and restrict ourselves to the time interval $[0, {T\over \epsilon}]$. As in the previous cases, the residual estimate leads to the following theorem:
\begin{theorem}\label{theo6.4}
    Let  $w_{0} \in H^{s+7}(\mathbb{R}),$ $s\geq 1$ an integer and suppose $w^{\epsilon, \delta}$ is the solution of the KdV equation (\ref{kdv})  with  initial value $w(x,0)=w_{0}(x)$. Then, for any $T>0$ and  $0<c_{1}<c_{2}$   there exist   $\delta_{1}^{2}\leq {1\over 3}$ and $C>0$ such that  the solution $ u^{\epsilon, \delta}$ of the Cauchy problem for the IB equation
    \begin{align*}
       & u_{tt}-u_{xx}-\delta^{2} u_{xxtt}-\epsilon (u^{2})_{xx} =0 \\
       & u(x,0) =w_{0}(x), \text{ }u_{t}(x, 0) =w_{t}^{\epsilon, \delta}(x,0),
    \end{align*}%
    satisfies
    \begin{equation*}
        \Vert u^{\epsilon, \delta}(t) -w^{\epsilon, \delta}(t) \Vert _{H^{s}}\leq ~C \epsilon^{2}  t
    \end{equation*}%
    for all $\ t\in \left[ 0,\frac{T}{\epsilon }\right] $ and  all $\delta \in (0, \delta_{1}]$, $\epsilon \in \left[\frac{\delta^{2}}{c_{2}}, \frac{\delta^{2}}{c_{1}}\right]$.
\end{theorem}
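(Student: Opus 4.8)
The plan is to follow exactly the architecture used for Theorem~\ref{theo4.2} (and its nonlocal variant Theorem~\ref{theo5.2}), since the KdV equation is being treated as just another instance of a right-going model equation whose residual, when plugged into the IB equation, is small. First I would set $r=u-w$ where $w=w^{\epsilon,\delta}$ solves the KdV equation \eqref{kdv} and $u$ solves the IB Cauchy problem with data $u(x,0)=w_0(x)$, $u_t(x,0)=w_t^{\epsilon,\delta}(x,0)$; by construction $r(x,0)=r_t(x,0)=0$. Using the residual identity $w_{tt}-w_{xx}-\delta^2 w_{xxtt}-\epsilon(w^2)_{xx}=F_x$ together with the KdV residual estimate $\|F(t)\|_{H^s}\le C\epsilon^2$ established above (valid on $[0,\tfrac{T}{\epsilon}]$ via Corollary~\ref{cor6.3} with $k=7$, which is why the hypothesis $w_0\in H^{s+7}$ is needed), $r$ satisfies
\begin{equation*}
    \left(1-\delta^2 D_x^2\right)r_{tt}-r_{xx}-\epsilon\left(r^2+2wr\right)_{xx}=-F_x .
\end{equation*}
As before I introduce $\rho$ with $r=\rho_x$, $\rho(x,0)=\rho_t(x,0)=0$, so that the equation becomes $\left(1-\delta^2 D_x^2\right)\rho_{tt}-r_x-\epsilon(r^2+2wr)_x=-F$.

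Next I would define precisely the same modified energy $E_s^2(t)$ as in \eqref{ener}, with the two extra $\epsilon$-terms $\epsilon\langle\Lambda^s(wr),\Lambda^s r\rangle+\tfrac{\epsilon}{2}\langle\Lambda^s r^2,\Lambda^s r\rangle$, and introduce the bootstrap time
\begin{equation*}
    T_0^{\epsilon,\delta}=\sup\left\{t\le T^{\epsilon,\delta}:\|r(\tau)\|_{H^s}\le 1\ \text{for all }\tau\in[0,t]\right\}.
\end{equation*}
The positivity $E_s^2\ge\tfrac14(\|\rho_t\|_{H^s}^2+\delta^2\|r_t\|_{H^s}^2+\|r\|_{H^s}^2)$ for $\epsilon$ small holds verbatim. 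Differentiating $E_s^2$, substituting the $\rho$-equation to remove $\rho_{tt}$, and using the commutator estimates \eqref{est1}--\eqref{est2} exactly as in the proof of Theorem~\ref{theo4.2} — the only point where $w$ enters is through $\|w\|_{H^{s+1}}$ and $\|w_t\|_{L^\infty}$-type factors, which are uniformly bounded by Corollary~\ref{cor6.3} — gives the differential inequality $\tfrac{d}{dt}E_s^2\le C(\epsilon E_s^2+\epsilon^2 E_s)$, where the $\epsilon^2$ (rather than $\epsilon^2+\delta^4$) comes from the sharper KdV residual bound. Gronwall with $E_s(0)=0$ then yields $E_s(t)\le C'\epsilon^2 t$ for $t\le T_0^{\epsilon,\delta}\le\tfrac{T}{\epsilon}$, and the standard continuation argument closes the bootstrap: $\|r(T_0^{\epsilon,\delta})\|_{H^s}\le C'\epsilon^2 T_0^{\epsilon,\delta}\le C'\epsilon T<1$ once $\delta\le\delta_1$ (hence $\epsilon\le\delta^2/c_1\le\delta_1^2/c_1$) is small, forcing $T_0^{\epsilon,\delta}=T^{\epsilon,\delta}=\tfrac{T}{\epsilon}$, which gives the claimed estimate.

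The only genuinely new bookkeeping — and the one place I would be careful — is the compatibility of the hypotheses. The energy argument needs $w\in H^{s+1}$ and $w_t\in H^{s-1}$ uniformly, but the residual estimate $\|F\|_{H^s}\le C\epsilon^2$ needs up to five $x$-derivatives of $w$ and four of $w_t$ in $F$, hence $w\in H^{s+5}$ and, after invoking \eqref{five} to control $D_x^3 D_t^2 w$ via $\|w_t\|_{H^{s+4}}$, actually $w_t\in H^{s+4}$; since Corollary~\ref{cor6.3} gives $w_t\in H^{s+k-3}$ from $w_0\in H^{s+k}$, one needs $s+k-3\ge s+4$, i.e. $k\ge 7$, matching the stated hypothesis $w_0\in H^{s+7}$. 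I would also note the structural role of the constraint $\epsilon\in[\delta^2/c_2,\delta^2/c_1]$: it is exactly what makes $\delta^2/\epsilon$ bounded above and below, so that the substitution $w=\tfrac{9}{2}\tfrac{\delta^2}{\epsilon}q$, $\delta^2=\bar\epsilon/3$ transfers the $\bar\epsilon$-uniform bounds of Theorem~\ref{theo6.3} into $\epsilon,\delta$-uniform bounds on $w$ — without it Corollary~\ref{cor6.3} fails and the whole scheme breaks. There is no real obstacle beyond this verification; the analytic heart (commutator estimates, the $\rho$-trick to trade $\|r_t\|_{H^s}$ for $\|r_t\|_{H^{s-1}}$ and avoid losing powers of $\delta$) is identical to the CH case and needs no modification.
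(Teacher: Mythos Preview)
Your proposal is correct and follows essentially the same route as the paper: compute the KdV residual $F$, invoke Corollary~\ref{cor6.3} with $k=7$ to bound it by $C\epsilon^{2}$, and then rerun the energy argument of Theorem~\ref{theo4.2} verbatim (same $E_{s}^{2}$, same commutator estimates, same bootstrap). One small inaccuracy: in the KdV case the residual $F$ contains no second time derivative of $w$ --- after substituting $w_{t}=-w_{x}-\epsilon ww_{x}-\tfrac{\delta^{2}}{2}w_{xxx}$ every term involves only $w$ and $w_{t}$ --- so there is no need to invoke \eqref{five}; the requirement $w_{t}\in H^{s+4}$ (hence $k=7$) comes directly from the term $D_{x}w_{xxxt}$ in $F$, exactly as the paper states.
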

The result in Theorem \ref{theo6.4}, namely the rigorous justification of the KdV approximation of the IB equation, was already proved by Schneider \cite{schneider}. The discussion in Section \ref{sec5} allows us to prove a similar theorem for the general class of nonlocal wave equations.  Again we have to estimate the term $D_{x}^{3}M_{\delta }w_{tt}$ in the residue $F^{M}$. We get
\begin{equation*}
        \left\Vert D_{x}^{3}M_{\delta}w_{tt}\right\Vert_{H^{s}} \leq  \left\Vert w_{tt}\right\Vert_{H^{s+3+\sigma-4}}
                \leq C  \left\Vert w\right\Vert_{H^{s+3+\sigma-4+6}}  = C  \left\Vert w\right\Vert_{H^{s+\sigma+5}},
\end{equation*}%
which requires taking $k=\sigma+5$ in Corollary \ref{cor6.3}. Hence we get:
\begin{theorem}\label{theo6.5}
    Let  $w_{0} \in H^{s+\sigma+5}(\mathbb{R}),$ $s>1/2$, $s+\sigma$ an integer, $\sigma=\max\{r, 4\}$ and suppose $w^{\epsilon, \delta}$ is the solution of the KdV equation (\ref{kdv})  with  initial value $w(x,0)=w_{0}(x)$. Then, for any $T>0$ and  $0<c_{1}<c_{2}$   there exist   $\delta_{1}^{2}\leq {1\over 3}$ and $C>0$  such that  the solution $ u^{\epsilon, \delta}$ of the Cauchy problem for the nonlocal equation
    \begin{align*}
       & u_{tt}=\beta_{\delta }\ast (u+\epsilon u^{2})_{xx} \\
       & u(x,0) =w_{0}(x), \text{ }u_{t}(x, 0) =w_{t}^{\epsilon, \delta}(x,0),
    \end{align*}%
    satisfies
    \begin{equation*}
        \Vert u^{\epsilon, \delta}(t) -w^{\epsilon, \delta}(t) \Vert _{H^{s}}\leq ~C \epsilon^{2}  t
    \end{equation*}%
    for all $\ t\in \left[ 0,\frac{T}{\epsilon}\right] $ and all $\delta \in (0, \delta_{1}]$,  $\epsilon \in \left[\frac{\delta^{2}}{c_{2}}, \frac{\delta^{2}}{c_{1}}\right]$.
\end{theorem}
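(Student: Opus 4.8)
The plan is to run the argument of Theorem~\ref{theo6.4} (the KdV approximation of the IB equation) and to graft onto it the device introduced in Theorem~\ref{theo5.2} for handling the extra nonlocal term $\delta^{4}D_{x}^{4}M_{\delta}u_{tt}$ in (\ref{del-equ}). First I would let $w=w^{\epsilon,\delta}$ be the KdV solution furnished by Corollary~\ref{cor6.3} with $k=\sigma+5$: the hypotheses $w_{0}\in H^{s+\sigma+5}(\mathbb{R})$ and $s+\sigma$ integer are exactly what make that corollary (which rests on Theorem~\ref{theo6.3}, stated for integer Sobolev indices) applicable, and it yields a uniform bound on $\Vert w(t)\Vert_{H^{s+\sigma+5}}$ as well as on $\Vert w_{t}(t)\Vert_{H^{s+\sigma+2}}$ and, after differentiating (\ref{kdv}) once more, on $\Vert w_{tt}(t)\Vert_{H^{s+\sigma-1}}$, uniformly for $\delta^{2}\in(0,\delta_{1}^{2}]$ and $\epsilon\in[\delta^{2}/c_{2},\delta^{2}/c_{1}]$. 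Setting $r=u-w$ and choosing $\rho$ with $r=\rho_{x}$, the matched initial data give $r(x,0)=r_{t}(x,0)=0$ and $\rho(x,0)=\rho_{t}(x,0)=0$.

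Next I would estimate the residual. Substituting $w$ into (\ref{del-equ}) produces a residual of the form $D_{x}F^{M}$ with $F^{M}=F+\delta^{4}D_{x}^{3}M_{\delta}w_{tt}$, where $F$ is the explicit KdV residual for the IB equation written out above Theorem~\ref{theo6.4}; using $\epsilon\delta^{2}\le c_{2}\epsilon^{2}$ and $\delta^{4}\le c_{2}^{2}\epsilon^{2}$ (valid precisely because $\epsilon\ge\delta^{2}/c_{2}$) one gets $\Vert F(t)\Vert_{H^{s}}\le C\epsilon^{2}$. For the extra piece, (\ref{mdelest}) and the cost of passing from $w$ to $w_{tt}$ give
\begin{equation*}
    \Vert D_{x}^{3}M_{\delta}w_{tt}\Vert_{H^{s}}\le C\Vert w_{tt}\Vert_{H^{s+\sigma-1}}\le C\Vert w\Vert_{H^{s+\sigma+5}}\le C,
\end{equation*}
and multiplying by $\delta^{4}\le c_{2}^{2}\epsilon^{2}$ shows $\Vert\delta^{4}D_{x}^{3}M_{\delta}w_{tt}\Vert_{H^{s}}\le C\epsilon^{2}$. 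Hence $\Vert F^{M}(t)\Vert_{H^{s}}\le C\epsilon^{2}$ on $[0,T/\epsilon]$.

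Then I would close with the energy estimate. The error $(\rho,r)$ solves
\begin{equation*}
    \left(1-\delta^{2}D_{x}^{2}+\delta^{4}D_{x}^{4}M_{\delta}\right)\rho_{tt}-r_{x}-\epsilon\left(r^{2}+2wr\right)_{x}=-F^{M},
\end{equation*}
and I would use the modified energy $E_{s,M}^{2}=E_{s}^{2}+\frac{1}{2}\delta^{4}\langle\Lambda^{s}M_{\delta}D_{x}r_{t},\Lambda^{s}D_{x}r_{t}\rangle$, with $E_{s}^{2}$ as in (\ref{ener}). Exactly as in the proof of Theorem~\ref{theo5.2}, collecting the $\rho_{t},r_{t}$ terms rewrites them as $\int(1+\xi^{2})^{s}\widehat{\beta}(\delta\xi)^{-1}|\widehat{\rho_{t}}(\xi)|^{2}\,d\xi$, and the ellipticity upper bound on $\widehat{\beta}$ yields coercivity $E_{s,M}^{2}\ge C E_{s}^{2}\ge C'(\Vert\rho_{t}\Vert_{H^{s}}^{2}+\delta^{2}\Vert r_{t}\Vert_{H^{s}}^{2}+\Vert r\Vert_{H^{s}}^{2})$ for small $\epsilon$. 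Differentiating $E_{s,M}^{2}$ and eliminating $\rho_{tt}$, the time derivative of the new term cancels the $\delta^{4}D_{x}^{4}M_{\delta}\rho_{tt}$ contribution, leaving precisely the right-hand side treated in Section~\ref{sec4} (with $F$ replaced by $F^{M}$ and the commutator estimates (\ref{est1})--(\ref{est2}) covering the general case $s>1/2$), so that
\begin{equation*}
    \frac{d}{dt}E_{s,M}^{2}(t)\le C\left(\epsilon E_{s,M}^{2}(t)+\epsilon^{2}E_{s,M}(t)\right).
\end{equation*}
Since $E_{s,M}(0)=0$, Gronwall's inequality gives $E_{s,M}(t)\le C\epsilon^{2}t$, hence $\Vert r(t)\Vert_{H^{s}}\le C\epsilon^{2}t$, for $t\in[0,T/\epsilon]$. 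The a priori bound $\Vert r\Vert_{H^{s}}\le1$ (used through a stopping time $T_{0}^{\epsilon,\delta}$ defined as in the proof of Theorem~\ref{theo4.2}) is then recovered by shrinking $\delta_{1}$, since $\epsilon\le\delta^{2}/c_{1}\le\delta_{1}^{2}/c_{1}$ forces $C\epsilon^{2}(T/\epsilon)=C\epsilon T<1$ on the whole interval.

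The part needing the most care --- though it is bookkeeping rather than a new idea, given Theorems~\ref{theo5.2} and \ref{theo6.4} --- is the interplay of the KdV scaling with the nonlocal correction: one must check that the window $\delta^{2}\sim\epsilon$ is exactly what converts every $\delta^{4}$-coefficient (both inside $F$ and in $\delta^{4}D_{x}^{3}M_{\delta}w_{tt}$) into an $O(\epsilon^{2})$ term; that $k=\sigma+5$ in Corollary~\ref{cor6.3} genuinely supplies enough regularity to absorb the $\sigma-4$ derivative loss of $M_{\delta}$ stacked on the six-derivative cost of $w\mapsto w_{tt}$; and that the sign-indefinite extra term in $E_{s,M}^{2}$ still leaves the energy coercive. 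None of these steps introduces a difficulty absent from Sections~\ref{sec5} and \ref{sec6}.
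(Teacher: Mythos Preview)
Your proposal is correct and follows essentially the same route as the paper: take $k=\sigma+5$ in Corollary~\ref{cor6.3}, estimate the extra residual piece via $\Vert D_{x}^{3}M_{\delta}w_{tt}\Vert_{H^{s}}\le C\Vert w_{tt}\Vert_{H^{s+\sigma-1}}\le C\Vert w\Vert_{H^{s+\sigma+5}}$, use the scaling $\delta^{2}\sim\epsilon$ to turn every $\delta^{4}$ into $O(\epsilon^{2})$, and close with the modified energy $E_{s,M}^{2}$ from Theorem~\ref{theo5.2}. The only difference is cosmetic: you spell out the coercivity, the stopping-time, and the $\delta^{4}\le c_{2}^{2}\epsilon^{2}$ conversions explicitly, whereas the paper leaves these to the reader by pointing back to Sections~\ref{sec4}--\ref{sec5}.
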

We finally note that in the KdV case $T$ can be chosen arbitrarily large while in the CH or the BBM cases $T$ is determined by the equation.

\section*{Acknowledgments} Part of this research was done while the third author was visiting the Institute of Mathematics at the Technische Universit\"at Berlin. The third author wants to thank Etienne Emmrich and his group for their warm hospitality.

\medskip
\medskip

\end{document}